\documentclass[12pt]{article}

\usepackage{amsmath,amssymb,mathtools,amsthm}
\mathtoolsset{showonlyrefs}
\usepackage{dsfont}
\usepackage{enumerate}
\usepackage{authblk}
\usepackage{subcaption}
\usepackage{float}
\usepackage{pgfplots}
\usepackage{tikz}
\usetikzlibrary{arrows.meta}
\usepackage[utf8]{inputenc}
\usepackage{xcolor}
\definecolor{refereered}{RGB}{155,25,45}
\newif\ifrefereecomments
\refereecommentstrue

\usepackage[
    inner=2.3cm,
    outer=2.3cm,
    top=2.7cm,
    marginparwidth=2.5cm,
    marginparsep=0.1cm
    ]
    {geometry}
\usepackage[pdffitwindow=false,
            plainpages=false,
            pdfpagelabels=true,
            pdfpagemode=UseOutlines,
            pdfpagelayout=SinglePage,
            bookmarks=false,
            colorlinks=true,
            hyperfootnotes=false,
            linkcolor=blue,
            urlcolor=blue!30!black,
            citecolor=green!50!black]{hyperref}
    
\newcommand{\R}{\mathds{R}}

\newcommand{\E}{\mathds{E}}

\newcommand{\norm}[1]{\left\lVert#1\right\rVert}
\newcommand{\abs}[1]{\left|#1\right|}
\newcommand{\1}{\mathds{1}}
\renewcommand{\P}{\mathds{P}}

\newcommand{\Id}{\textup{Id}}
\newcommand{\dd}{\mathrm{d}}

\newtheorem{theorem}{Theorem}[section]
\newtheorem{proposition}[theorem]{Proposition}
\newtheorem{lemma}[theorem]{Lemma}

\newtheorem{definition}[theorem]{Definition}

\newtheorem{maintheorem}{Theorem}

\newtheorem{assumption}{Assumption}
\newtheorem{assumptionalt}{Assumption}[assumption]
\newenvironment{assumptionp}[1]{
  \renewcommand\theassumptionalt{#1}
  \assumptionalt
}{\endassumptionalt}

\theoremstyle{definition}

\newtheorem{remark}[theorem]{Remark}

\newtheorem{example}[theorem]{Example}
\newtheorem*{heuristic}{Heuristic}

\newenvironment{nalign}[1][]{%
    \begin{equation}%
    \if\relax\detokenize{#1}\relax\else\label{#1}\fi
    \begin{aligned}[c]
}{%
    \end{aligned}%
    \end{equation}%
    \ignorespacesafterend
}

\title{Characterizing stochastic inertia: when does noise increase escape times?}
\date{}

\author[1,2]{Robin Chemnitz}
\author[1,3]{Maximilian Engel}
\author[4]{P\'eter Koltai}

\affil[1]{Freie Universität Berlin}
\affil[2]{Tulane University}
\affil[3]{University of Amsterdam}
\affil[4]{Universität Bayreuth}

\begin{document}

\maketitle

\begin{abstract}
    Motivated by observations in atmospheric modelling, we investigate the effect of ``stochastic inertia'': in certain dynamical systems, small noise induces trajectories to stay longer in a dynamically critical subset of state space than in the purely deterministic situation.
    Under mild general assumptions on the stochastic differential equation, we derive an integral formula along deterministic trajectories whose sign provides a sufficient condition for the occurrence of stochastic inertia. Excluding situations with open sets of infinite escape times and assuming uniformly elliptic diffusion, we additionally identify this integral formula as the zero noise derivative of the average escape time.
    The proofs make crucial use of Dynkin's formula and suitable adaptations of Freidlin--Wentzell theory.
We present an algorithmic approach to compute the integral formula and, by examples of saddle points and relaxation oscillators, demonstrate the following heuristic:
stochastic inertia appears in the presence of an unstable direction that predominantly leads the escape dynamics.
\end{abstract}

\tableofcontents

\section{Introduction}

The response of systems to varying noise strengths is a widely investigated phenomenon. A prominent case of this is studying how the system exits from or transitions between regions of interest in its state space. An early and important quantitative result in this direction is the Eyring--Kramers transition rate formula~\cite{eyring1935activated,kramers1940brownian}, improving on Arrhenius' law. It states that, asymptotically in the limit of vanishing noise, expected transition times of an overdamped diffusion between two potential wells' minima increase exponentially with the inverse of the noise strength. Additionally, it quantifies the dependence of these transition times on the geometry of the potential landscape in the vicinity of specific transition paths (so-called instantons) through the Hessian of the potential---see~\cite{bovier2004metastability,helffer2004quantitative} and the survey~\cite{berglund2013kramers}.
For more general, irreversible diffusions, an Eyring--Kramers-type transition rate formula has been established in~\cite{bouchet2016generalisation} using the Freidlin--Wentzell large-deviations theory~\cite{freidlin1998random}. For further descriptions of the behavior of irreversible diffusions, see, for instance~\cite{graham1989macroscopic,berglund2004noise,berglund2014noise}.

The above studies consider transitions between regions organized around central structures of the noise-free dynamics: fixed points and periodic orbits. While this is natural, one could also be interested in the behavior of the system in a region not determined by its dynamical properties. An example could be a certain atmospheric constellation with regard to its effect on agriculture. The question we ask will be, given a system starting in a point within a prescribed region, how can the addition of noise to the dynamics change the (expected) time to exit this region. 

Indeed, our motivation to consider this problem rests on a study by Dorrington and Palmer \cite{dorrington2023interaction}, who investigated the increase in regime lifetimes under additive white noise in a low-order paradigm model of a mid-latitude atmospheric flow. This phenomenon has been numerically observed earlier in low-order models of atmospheric flows by Kwasniok~\cite{kwasniok2014enhanced}. The first numerical and theoretical studies addressing this effect in one-dimensional discrete-time dynamical systems seem to have appeared in~\cite{franaszek1991influence} and~\cite{reimann1994suppression}, respectively. For similar effects see also~\cite{altmann2010noise} and~\cite[sections~4.1.2, 4.2.3, and 4.3.2]{lai2011transient}. A data-based computational approach to test the phenomenon from time-sampled trajectories of the deterministic system has appeared in~\cite{schoeller2026noise}.
In forced (thus non-stationary) systems, such a ``noise enhanced stability'' has been observed, for instance, in~\cite{dayan1992stochastic,mantegna1996noise}.

An oversimplified and informal summary of our results is as follows. If $E(x)$ denotes the deterministic escape time of a flow $\phi^t$ starting in $x$ from a prescribed region, then at leading order, the small-noise change in the expected exit time from this region for the system subjected to isotropic white noise is governed by the quantity
\[
\int_0^{E(x)} \Delta E( \phi^t x)\,\dd t.
\]
We shall now formalize our setting and the statement.

\subsection{Setting}
Consider the $d$-dimensional SDE
\begin{equation}\label{eq:SDE}
    \dd X^t_\sigma = v(X^t_\sigma)\, \dd t +  \sqrt{2\sigma}\, G(X_\sigma^t)\, \dd W^t,
\end{equation}
where $v:\R^d \to \R^d$ is a vector field, $G:\R^d \to \R^{d\times d}$ is a diffusion matrix, $W^t$ is a $d$-dimensional Brownian motion and $\sigma \geq 0$ governs the strength of the noise. 
We assume that $v$ is locally $C^2$ and that $G$ is locally Lipschitz continuous. Our main result, Theorem \ref{thm:stoch_inertia}, is formulated for general diffusion matrices $G$. In Theorem \ref{thm:derivative}, we impose further restrictions on $G$. Throughout, we only work on bounded subsets of $\R^d$ such that we may assume w.l.o.g.~that $v$ is globally $C^2$ and $G$ is globally Lipschitz continuous.

Let $M\subset \R^d$ be a bounded open domain with piecewise $C^2$ boundary $\partial M \subset \R^d$. We assume that $\partial M$ coincides with the boundary of the closure $\overline{M}$. We consider $X^0_\sigma \in M$ and study the escape time from the set $M$. Given a noise strength $\sigma \geq 0$, define the random escape time from $M$,
\begin{equation*}
    \tau_{\sigma}^M = \inf \{t >0 \mid X^t_\sigma \notin M\}.
\end{equation*}
We study the expectation of $\tau_{\sigma}^M$ in dependence on~$\sigma$ for each initial condition~$X_\sigma^0\in M$.  Given $x\in M$, define
\begin{equation*}
    E^M_\sigma(x) := \E_x[ \tau_{\sigma}^M],
\end{equation*}
where $\E_x$ is the expectation assuming $X^0_\sigma = x$. Without additional assumptions $E_\sigma^M(x)$ may be infinite. We study the function $\sigma \mapsto E_\sigma^M(x)$. 

For $\sigma = 0$, the deterministic escape time is given by $E_0^M(x)$ and may be infinite, e.g.~when $x$ is a fixed point of the vector field $v$. For uniformly elliptic noise, it follows from standard theory that $E^M_\sigma(x)$ is finite for all $\sigma>0$ since $M$ is assumed to be bounded. Furthermore, as $\sigma\to \infty$, the expected escape time $E^M_\sigma(x)$ decreases to $0$ for all initial conditions~$x\in M$. In \cite{kwasniok2014enhanced} and \cite{dorrington2023interaction}, the authors observed that for certain hyperbolic systems $E^M_\sigma(x)$ is not simply monotonically decreasing in~$\sigma$. Instead, for $\sigma$ close to $0$, they observed $E^M_\sigma(x) > E_0^M(x)$. We call this phenomenon \emph{stochastic inertia}. Note that this is only possible if $E^M_0(x) < \infty$.

\begin{definition}
    Let $x\in M$ for which $E_0^M(x) < \infty$. If there is $\sigma^* > 0$ such that $E_\sigma^M(x)> E_0^M(x)$ for all $\sigma \in (0, \sigma^*)$, we say there is \emph{stochastic inertia} in $x$.
\end{definition}

\subsection{Main results}

Our first main result is a sufficient condition for stochastic inertia in a point $x\in M$. To state the result, we introduce some notation for the deterministic trajectory, i.e.~for $X_0^t$. Let $\phi^t:\R^d \to \R^d$ be the solution map of the deterministic vector field $v$, i.e.~$\phi^0x = x$ and
\begin{equation}
    \partial_t \phi^tx = v(\phi^tx).
\end{equation}
By definition, for all $x\in M$ we find $\phi^tx\in M$ for $t\in[0,E_0^M(x))$. Assuming that $E_0^M(x)<\infty$, we find $\phi^{E_0^M(x)}x \in \partial M$. In that case, we denote the deterministic exit point of $x\in M$ by $\theta(x) := \phi^{E_0^M(x)}x$. Assuming that the boundary $\partial M$ is differentiable around $\theta(x)$, we say that $x$ leaves $M$ \emph{non-tangentially} if $v(\theta(x))$ is not tangential to $\partial M$ in the point $\theta(x)$. Note that $v(\theta(x)) = 0$ is not possible, since otherwise the trajectory of $x$ would never have reached $\theta(x)$ in finite time, as we assumed $v$ to be locally $C^2$. Hence, if $x$ leaves $M$ non-tangentially, there are $\delta > 0$ and $\epsilon>0$ such that $\text{dist}(\phi^{E_0^M(x) + \delta}x, M) > \epsilon$. Whenever the deterministic escape time $E_0^M$ is twice differentiable, we define 
\begin{equation}
    \Gamma(x) \coloneqq \operatorname{trace} \left( G(x)^\top \operatorname{Hess} \left[E_0^M(x) \right]  G(x)\right).
\end{equation}
In Section \ref{sec:computing}, we present an algorithmic method to compute $\Gamma(x)$ numerically. Note that in the case of isotropic additive noise, i.e. $G=\Id$, this term reduces to $\Gamma(x) = \Delta E_0^M(x)$.

The following is our first main result. Its proof is contained in Section \ref{sec:proofs}.
\begin{maintheorem}\label{thm:stoch_inertia}
    Let $x\in M$ with $E_0^M(x) < \infty$ such that the trajectory starting in~$x$ leaves $M$ at $\theta(x) \in \partial M$ non-tangentially. Assume that $\partial M$ is twice differentiable in a neighbourhood of~$\theta(x)$. Then, $E_0^M(x)$ is twice differentiable along the deterministic trajectory $\{\phi^t x\}_{t\in [0, E_0^M(x))}$. 
    If
    \begin{equation} \label{eq:stoch_inertia}
         \int_0^{E_0^M(x)} \Gamma(\phi^t x) \, \dd t > 0,
    \end{equation}
    then there is stochastic inertia in $x$. 
\end{maintheorem}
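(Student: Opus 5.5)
We prove the theorem in three steps: regularity of $E_0^M$ near the trajectory, Dynkin's formula applied to a localized version of $E_0^M$, and control of the error terms coming from rare paths that leave a neighbourhood of the deterministic trajectory.

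\emph{Step 1: regularity.} Since the exit at $\theta(x)$ is non-tangential and $\partial M$ is $C^2$ near $\theta(x)$, the implicit function theorem applies to $g(\phi^t y)=0$, where $g$ is a local $C^2$ defining function for $\partial M$. This gives a $C^2$ hitting-time function $T$ on a tubular neighbourhood $U$ of the compact trajectory segment $\{\phi^t x\}_{t\in[0,E_0^M(x)]}$. For $y$ in a possibly smaller neighbourhood of the open segment we have $T=E_0^M$. The reason is that nearby points stay in $M$ up to time $T(y)$ by continuity and compactness, and then cross $\partial M$ transversally.

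By construction $T(\phi^t y)=T(y)-t$. Differentiating gives $\nabla T\cdot v=-1$ on $U$. We extend $T$ to a $C^2$ function $\tilde T$ on $\R^d$, with $\tilde T$ bounded and with bounded derivatives on $\overline M$. We also extend it slightly past $\partial M$ along the flow, so that $\tilde T<0$ just outside $M$ near $\theta(x)$.

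\emph{Step 2: Dynkin's formula.} Let $L_\sigma=v\cdot\nabla+\sigma\operatorname{trace}(G^\top\operatorname{Hess}\,G)$ be the generator. Let $\tau_U$ be the exit time from $U\cap M$. Dynkin's formula applied to $\tilde T$ up to $\tau=\tau^M_\sigma\wedge\tau_U$ gives
\[
\E_x[\tilde T(X_\sigma^{\tau})]-\tilde T(x)=\E_x\Big[\int_0^{\tau}\big(-1+\sigma\Gamma(X^s_\sigma)\big)\dd s\Big].
\]
Rearranging yields
\[
\E_x[\tau]=E_0^M(x)-\E_x[\tilde T(X_\sigma^\tau)]+\sigma\,\E_x\Big[\int_0^\tau\Gamma(X^s_\sigma)\dd s\Big].
\]
On the event $\{\tau=\tau^M_\sigma<\tau_U\}$ the path exits through $\partial M$ near $\theta(x)$, so $\tilde T(X^\tau)=0$. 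We also use $E^M_\sigma(x)\ge\E_x[\tau]$.

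\emph{Step 3: asymptotics and error terms.} Freidlin--Wentzell (or a simple Gronwall and BDG estimate) gives
\[
\P_x\Big(\sup_{t\le E_0^M(x)+\delta}|X_\sigma^t-\phi^tx|>\epsilon\Big)\le e^{-c/\sigma}.
\]
Hence the event $\{\tau_U<\tau_\sigma^M\}$ has exponentially small probability. Its contribution $\E_x[\tilde T(X^\tau);\tau_U<\tau^M_\sigma]$ is then $O(e^{-c/\sigma})$, because $\tilde T$ is bounded. By dominated convergence,
\[
\E_x\Big[\int_0^\tau\Gamma(X^s)\dd s\Big]\to\int_0^{E_0^M(x)}\Gamma(\phi^tx)\dd t.
\]
This needs the integrability of $\tau\le E_0^M(x)+\delta$ outside an exponentially small event, together with a uniform integrability bound $\E_x[\tau^2]<\infty$ obtained from the same tube estimate.

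Putting the pieces together,
\[
E_\sigma^M(x)\ge E_0^M(x)+\sigma\Big(\int_0^{E_0^M(x)}\Gamma(\phi^tx)\dd t+o(1)\Big)-O(e^{-c/\sigma}),
\]
which is $>E_0^M(x)$ for small $\sigma$ under \eqref{eq:stoch_inertia}.

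The main obstacle is the behaviour of $E_0^M$ near the end of the trajectory. The deterministic escape time may be discontinuous off $U$, so we work only with the local $C^2$ function $T$. We must therefore make sure that paths leaving $M$ near $\theta(x)$, including tangential-ish grazing of $\partial M$ elsewhere in the tube, are handled by the localization. Shrinking the tube so that it meets $\partial M$ only near $\theta(x)$, where the crossing is transversal, should suffice.
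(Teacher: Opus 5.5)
Your proposal is correct and follows essentially the same route as the paper. Both arguments localize to a neighbourhood $U$ of the trajectory on which $E_0^M$ is $C^2$; the paper uses the flow box $U=\{\phi^{-t}y \mid y\in I,\ t\in(0,T)\}$ where you use a tube with a $C^2$ extension $\tilde T$, and with your notation $\tau_U$ should be the exit time from the tube itself, not from $U\cap M$, or else the event $\{\tau=\tau^M_\sigma<\tau_U\}$ is empty. The remaining steps also match: Dynkin's formula up to the exit from $U$, discarding $E_\sigma^M-E_\sigma^U\geq 0$, the exponentially small boundary term, and convergence of the $\Gamma$-integral via convergence in probability plus uniform integrability. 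That last step is a Vitali argument rather than dominated convergence, and the uniform-in-$\sigma$ integrability comes from iterating the tube estimate, which is uniform in the starting point, with the Markov property to get $\P_y(\tau>nT_\delta)\leq (Ce^{-\gamma/\sigma})^n$, exactly as the paper does.
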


To obtain a more quantitative version of Theorem \ref{thm:stoch_inertia}, consider the derivative
\begin{equation}\label{eq:derivative_sigma}
    \mathcal D(x) = \partial_\sigma E^M_\sigma(x) \big|_{\sigma = 0}.
\end{equation}
Without further assumptions, it is, in general, not true that $\sigma \mapsto E_\sigma^M(x)$ is differentiable, or even continuous, in~$\sigma = 0$. See Example \ref{ex:discontinuity} for a case in which $E_0^M(x) < \infty$ but $\lim_{\sigma \to 0} E_\sigma^M(x) = \infty$. 
If the derivative $\mathcal D(x)$ is well-defined and positive, then for small $\sigma>0$, we find $E^M_\sigma(x)>E_0^M(x)$, and there is stochastic inertia in~$x$. Conversely, if $\mathcal D(x) < 0$, there cannot be stochastic inertia in~$x$. %

We impose the following two assumptions, one on the deterministic and one on the stochastic component, such that the derivative $\mathcal{D}(x)$ is well-defined and can be expressed in terms of an explicit formula.

\begin{assumptionp}{I}\label{ass:no_stable}
   The set of points $x\in M$ with $E_0^M(x) = \infty$ has empty interior.
\end{assumptionp}
\begin{assumptionp}{II}\label{ass:uniformly elliptic}
The noise is \emph{uniformly elliptic}, i.e., there is a constant $\Lambda>0$ such that for all $x\in \R^d$ and all $\xi \in \R^d$
    \begin{equation}
        \Lambda^{-1}|\xi|^2 \leq |G(x)\xi|^2  \leq \Lambda |\xi|^2.
    \end{equation}
\end{assumptionp}
The two assumptions together play the role of preventing the exit times from taking extremely large values away from a small neighborhood of the deterministic trajectory associated with the point in question.

\begin{maintheorem}\label{thm:derivative}
    Let $x\in M$ be a point that satisfies the conditions of Theorem \ref{thm:stoch_inertia} and let Assumption \ref{ass:no_stable} and Assumption \ref{ass:uniformly elliptic} be satisfied.
    
    Then, $\mathcal{D}(x)$ is well-defined and given by
    \begin{equation}\label{eq:thm}
        \mathcal D(x) = \int_0^{E_0^M(x)} \Gamma(\phi^tx) \, \dd t.
    \end{equation}
\end{maintheorem}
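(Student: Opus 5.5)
The plan is to apply Dynkin's formula to the deterministic escape time $E_0^M$, used as a test function for the stochastic process, and to localise near the deterministic trajectory by Freidlin--Wentzell-type estimates. Write $E_0 := E_0^M$. Along deterministic trajectories $v\cdot\nabla E_0 = -1$ wherever $E_0$ is finite and differentiable, so formally
\begin{equation}
    \mathcal{L}_\sigma E_0 = -1 + \sigma\,\Gamma, \qquad \mathcal{L}_\sigma = v\cdot\nabla + \sigma \operatorname{trace}(G G^\top \operatorname{Hess}),
\end{equation}
and Dynkin's formula at the exit time gives
\begin{equation}
    \E_x[E_0(X^{\tau}_\sigma)] - E_0(x) = -\E_x[\tau] + \sigma\,\E_x\!\int_0^{\tau}\Gamma(X^s_\sigma)\,\dd s .
\end{equation}
The boundary term is essentially zero because $E_0$ vanishes on the exit portion of $\partial M$. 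Dividing by $\sigma$, we get $\frac{E_\sigma(x)-E_0(x)}{\sigma} = \E_x\int_0^\tau \Gamma(X^s_\sigma)\,\dd s + o(1)$. The right-hand side tends to $\int_0^{E_0(x)}\Gamma(\phi^t x)\,\dd t$ as $\sigma\to 0$, since $X_\sigma$ concentrates on the deterministic path. This is the heuristic content of \eqref{eq:thm}.

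To make this rigorous, I would first use the non-tangential exit and the implicit function theorem, exactly as in the proof of Theorem \ref{thm:stoch_inertia}, to get a tubular neighbourhood $U$ of the trajectory $\{\phi^t x\}_{t\in[0,E_0(x)+\delta]}$ on which $E_0$ is $C^2$. I would then extend $E_0$ to a smooth function $\tilde E$ on $U$ that continues the level structure slightly outside $M$, taking negative values there. Next, I would stop the process at $\tau\wedge\tau_U$, where $\tau_U$ is the exit time from $U$, and apply Dynkin to $\tilde E$. On $\{\tau<\tau_U\}$ the boundary value is $\tilde E(X^\tau)=0$, up to the portion of $\partial M$ hit inside $U$, which lies in the transversal exit region. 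Freidlin--Wentzell large deviations give $\P_x(\tau_U<\tau) \le e^{-c/\sigma}$. The integrand $\Gamma$ is bounded and continuous on $U$, so bounded convergence yields $\E_x\int_0^{\tau\wedge\tau_U}\Gamma(X^s_\sigma)\,\dd s \to \int_0^{E_0(x)}\Gamma(\phi^t x)\,\dd t$. The same localisation also controls $\E_x[\tau\wedge\tau_U]$.

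The main obstacle is the contribution of the event $\{\tau_U<\tau\}$, since it requires $\E_x[\tau\,\1_{\tau_U<\tau}] = o(\sigma)$. By the strong Markov property this quantity is at most $\P_x(\tau_U<\tau)\cdot\sup_{y\in\partial U\cap M} E_\sigma(y)$. The exponentially small probability must therefore beat any growth of $\sup_{y\in M}E_\sigma(y)$ as $\sigma\to0$, and this is exactly where Assumptions \ref{ass:no_stable} and \ref{ass:uniformly elliptic} enter. I would try to prove a subexponential bound $\sup_{y\in M}E_\sigma(y)\le e^{\eta/\sigma}$ for every $\eta>0$. Uniform ellipticity makes the control cost to move distance $r$ in time $r$ of order $r$. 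Assumption \ref{ass:no_stable} says that points escaping deterministically in finite time are dense, and each such point has an open neighbourhood escaping within a bounded time. Combining these by compactness of $\overline M$, every $y\in M$ can be steered out of $M$ in time $T(\eta)$ with action at most $\eta$. Freidlin--Wentzell lower bounds then give escape probability at least $e^{-2\eta/\sigma}$ per time window of length $T(\eta)$ uniformly in $y$, and a geometric trials argument yields the bound on $\sup_y E_\sigma(y)$.

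Choosing $\eta < c/2$ then closes the argument: $\E_x[\tau\,\1_{\tau_U<\tau}]\le e^{-c/(2\sigma)} = o(\sigma)$. The remaining error terms (the remainder in Dynkin's formula and the region near the exit point) are $o(\sigma)$, and the expansion $E_\sigma(x)=E_0(x)+\sigma\int_0^{E_0(x)}\Gamma(\phi^tx)\,\dd t+o(\sigma)$ gives \eqref{eq:thm}.
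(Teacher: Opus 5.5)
Your proposal is correct and follows the paper's proof essentially step for step. It uses Dynkin's formula for $E_0^M$ stopped at the exit from a flow tube around the trajectory; the paper's $U\subset M$ plays the role of your $M\cap U$, so the extension $\tilde E$ outside $M$ is never actually evaluated. It then combines exponential Freidlin--Wentzell bounds for leaving the tube away from the exit patch with the subexponential bound $\sup_y E_\sigma^M(y)\le Ce^{\gamma/\sigma}$, which the paper derives from Assumptions~\ref{ass:no_stable}--\ref{ass:uniformly elliptic} via cheap escape paths and geometric trials (Lemma~\ref{lem:uniform_in_x}).

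One small fix is needed: $\int_0^{\tau\wedge\tau_U}\Gamma(X_\sigma^s)\,\dd s$ is not bounded, because $\tau\wedge\tau_U$ is not, so bounded convergence does not apply as stated. Either replace it by uniform integrability of the tube exit time (the paper's Lemma~\ref{lem:uniformly_integrable}, a geometric-trials argument), or truncate at a fixed time and absorb the remainder with the same subexponential bound.
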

In particular, while Theorem~\ref{thm:stoch_inertia} only indicates that positivity of the right-hand side of~\eqref{eq:thm} implies stochastic inertia, Theorem~\ref{thm:derivative} extends this by asserting that, under the additional assumptions, negativity of this integral indicates the \emph{absence of} stochastic inertia. 

\begin{remark}\label{rmk:scaling}
    Note that in the SDE \eqref{eq:SDE} the noise strength is given by $\sqrt{2 \sigma}$, and $\mathcal{D}(x)$ is the derivative of the expected escape time with respect to $\sigma$. Theorem \ref{thm:derivative} shows that $\mathcal{D}(x)$ can take non-zero, finite values. This shows that taking the derivative with respect to the square of the noise strength is the correct scaling.
\end{remark}

We can quickly obtain an intuition of how the central expression~\eqref{eq:thm} in the theorems arises. The
generator of~\eqref{eq:SDE} is
$L_\sigma = v\cdot\nabla + \sigma\operatorname{trace}\!\big(G^\top(\operatorname{Hess}\cdot)G\big)$,
and $E^M_\sigma$ formally solves the Dirichlet problem $L_\sigma E^M_\sigma=-1$
in $M$ with $E^M_\sigma=0$ on $\partial M$; see~\cite{schuss2009theory}. Inserting a regular expansion
$E^M_\sigma = u_0 + \sigma u_1 + \mathcal{O}(\sigma^2)$, each $u_k$ vanishing on
$\partial M$, and collecting powers of $\sigma$ gives
\[
  v\cdot\nabla u_0 = -1,
  \qquad
  v\cdot\nabla u_1 = -\operatorname{trace}\!\big(G^\top[\operatorname{Hess}u_0]G\big).
\]
The first equation identifies $u_0 = E_0^M$, so that the right-hand side of the
second is $-\Gamma$. The second is then a transport equation along the
deterministic flow: $\tfrac{\mathrm{d}}{\mathrm{d}t}u_1(\phi^t x) = -\Gamma(\phi^t x)$,
and integrating from $x$ to its exit point $\theta(x)$, where $u_1$ vanishes,
yields
\[
  u_1(x) = \int_0^{E_0^M(x)} \Gamma(\phi^t x)\,\mathrm{d}t .
\]
This computation is classical. In $\R^2$ and for $G=\mathrm{Id}$, Levinson
\cite{levinson1950first} made the expansion rigorous for
$\varepsilon\Delta u + v\cdot \nabla u + Cu = D$, under hypotheses that are,
however, global and considerably stronger than the ones we impose: all data of
class $C^6$ and the vector field needs to be free of equilibria and of
periodic orbits there. (For the exit time problem one has $C\equiv 0$, so the
alternative hypothesis $C<0$ is not available.) 
We also note that~\cite{matkowsky1977exit,naeh1990direct} consider the asymptotics of the expected exit time for small noise in the case of~$E_0^M \equiv \infty$.

In \cite{dorrington2023interaction} the authors give a heuristic explanation for stochastic inertia in a one-dimensional setting where the deterministic dynamics $\phi^t$ is ``convex'', in the sense that trajectories accelerate towards the boundary. In our setting, such a convexity corresponds to $\Gamma(\phi^{t} x)> 0$, which, by Theorem \ref{thm:stoch_inertia}, implies stochastic inertia. Therefore, Theorem \ref{thm:stoch_inertia} can be understood as a mathematically rigorous version of the reasoning of \cite{dorrington2023interaction}. In Section \ref{sec:fixed_point} we study stochastic inertia around fixed points with the help of Theorem \ref{thm:stoch_inertia}. The quintessence of that section is that we expect stochastic inertia around fixed points that have a single unstable direction significant for the escape from~$M$.

\subsection{Example}

The main idea behind the phenomenon and its connection to the formula given in the theorems above can be illustrated by a one-dimensional process, where~$v$ is positive, strictly monotonically increasing and such that the flow $\phi^t$ exists globally in time and space. Fix some point $x>0$ and consider the map $\Phi: t \mapsto \phi^t x$. Note that $\Phi$ is strictly convex, because $v$ is strictly increasing. Set $M = (-x^*, x^*)$ for some $x^* > x$, denote $T:= E^M_0(x)$ and note that $x^* = \theta(x)$, cf.~Fig.~\ref{fig:cartoon}. Now, since $E^M_0(\phi^tx) = T-t$, we have for any point $y\in [x,x^*]$ that $E^M_0(y) = T - \Phi^{-1}(y)$. Since $\Phi:[0,T] \to [x,x^*]$ is bijective, smooth, and strictly convex, we have that $\Phi^{-1}$ is strictly concave and hence $E^M_0: y \mapsto T-\Phi^{-1}(y)$ is again strictly convex, implying that $\smash{ \int_0^{E^M_0(x)} (E^M_0)''(\phi^t x)\,\dd t > 0 }$.
\begin{figure}[hbt]
    \centering
    \begin{tikzpicture}[>=Stealth,line cap=round]
  \def\T{5}      
  \def\X{4}      

  \draw[->] (-2pt,0) -- (\T+0.9,0) node[below] {$t$};
  \draw[->] (0,-2pt) -- (0,\X+0.9) node[left]  {};

  \draw (\T,2pt) -- (\T,-2pt) node[below] {$T$};
  \draw (2pt,\X) -- (-2pt,\X) node[left]  {$x^*$};
  \node[below] at (0,0) {$0$};      
  \node[left]       at (0,0) {$x$};   

  \draw[dashed,gray] (\T,0) -- (\T,\X) -- (0,\X);

  \draw[very thick, domain=0:\T, smooth,variable=\t]
        plot ({\t},{\X*(exp(2*\t/\T)-1)/(exp(2)-1)});
  \node at (2.5,2.1) {$\Phi$};

  \fill (\T,\X) circle (1.5pt);
\end{tikzpicture}
\qquad
\begin{tikzpicture}[>=Stealth,line cap=round]
  \def\T{5}      
  \def\X{4}      

  \draw[->] (-2pt,0) -- (\X+0.9,0) node[below] {};
  \draw[->] (0,-2pt) -- (0,\T+0.9) node[left]  {$t$};

  \draw (\X,2pt) -- (\X,-2pt) node[below] {$x^*$};
  \draw (2pt,\T) -- (-2pt,\T) node[left]  {$T$};
  \node[below] at (0,0) {$x$};      
  \node[left]       at (0,0) {$0$};   

  \draw[very thick,domain=0:\X, smooth, variable=\x]
      plot ({\x},{\T - (\T/2)*ln(1 + \x*(exp(2)-1)/\X)});
  \node at (2.3,2.1) {$E^M_0$};
\end{tikzpicture}
    \caption{Left: The flow of a differential equation with strictly monotonically increasing vector field. Right: Deterministic escape time function $E^M_0$ for the flow on the left.}
    \label{fig:cartoon}
\end{figure}

The reasoning from~\cite{dorrington2023interaction} can be understood for this situation as follows. 
Let us simplify noise to act only at one time instant by offsetting the trajectory by $\pm \delta$ with equal probability. In Figure~\ref{fig:cartoon}, if the trajectory is at the point $(t,y)$ on the graph of $\Phi$ when the perturbation occurs, it can be thought of as sliding the point along the graph by a vertical displacement of~$\pm \delta$. Now, since the graph is convex, a negative vertical displacement implies a larger negative shift horizontally than the positive horizontal shift that a positive vertical displacement would imply. Hence, in expectation, the noise induces an increase in the time to reach~$x^*$.

This latter, intuitive view is no longer applicable in higher dimensions. As we show in the next section, the question of stochastic inertia for higher-dimensional unstable hyperbolic fixed points is not trivial, but it can be analyzed by the formula given in theorems~\ref{thm:stoch_inertia} and~\ref{thm:derivative}.

\subsection{Structure}

The remainder of the work is structured as follows.

In Section~\ref{sec:examples} we show theorems~\ref{thm:stoch_inertia},~\ref{thm:derivative}, and, in particular, formula~\eqref{eq:thm} in action. To this end, in Section~\ref{sec:computing} we derive a computable formula for evaluating~\eqref{eq:thm} whenever $E_0^M$ is not known analytically; only $v$ and~$G$ are, together with a smooth function $N:\R^d\to\R$ such that~$M = \{N<0\}$. Then, in Section~\ref{sec:fixed_point}, we consider the natural case when $M$ is some neighborhood of an unstable fixed point. Section~\ref{ssec:relaxation_oscillator} shows how our results indicate stochastic inertia in relaxation oscillators, as showcased on the van der Pol system.
In Section~\ref{ssec:discontinuity} we demonstrate the role of Assumption~\ref{ass:no_stable}: $\sigma \mapsto E_\sigma^M(x)$ need not be continuous at~$\sigma=0$, let alone differentiable, if the deterministic system fails to exit $M$ from an open set of initial conditions.

Section~\ref{sec:proofs} contains the proofs of theorems~\ref{thm:stoch_inertia} and~\ref{thm:derivative}. The idea of the proof, for simplicity, in the case of~$G = \Id$, is to use Dynkin's formula to obtain
\begin{equation}
    \label{eq:Dynkin_sketch}
    \begin{aligned}
    \E_x\Big[E_0^M(X^{\tau_\sigma^M}_\sigma)\Big] &= E_0^M(x) + \E_x\left[ \int_0^{\tau_\sigma^M} (\sigma \Delta + v\cdot \nabla) E_0^M(X^t_\sigma)\, \dd t \right] \\
    &= E_0^M(x) + \sigma \E_x\left[ \int_0^{\tau_\sigma^M} \Delta E_0^M(X^t_\sigma)\, \dd t \right] - \E_x[\tau_\sigma^M],
    \end{aligned}
\end{equation}
where we used $v \cdot \nabla E_0^M = -1$ to arrive at the second line. Observing that $E_\sigma^M(x) = \E_x\big(\tau_\sigma^M\big)$, rearranging~\eqref{eq:Dynkin_sketch} would yield~\eqref{eq:thm} if the terms involved behaved in the $\sigma\to 0$ limit as naturally expected: $\smash{ \tau_\sigma^M \to E_0(x)}$, $ X_\sigma^t \to \phi^t x$ in an appropriate sense and~$\smash{\sigma^{-1}\E_x\big[E_0^M(X^{\tau_\sigma^M}_\sigma)\big] \to 0}$. A localization of~\eqref{eq:Dynkin_sketch} around the deterministic trajectory, together with quantitative estimates enabled by the Freidlin--Wentzell theory~\cite{freidlin1998random}, leads to the desired conclusions.

Finally, Section~\ref{sec:outlook} discusses possible future directions, in which sense and how the pointwise theory presented here could be lifted to a single quantity related to $M$ itself, describing an instance of stochastic inertia. While Section~\ref{ssec:regime_average} considers general difficulties that arise in this context, Section~\ref{ssec:limit_cycle} focuses specifically on stochastic limit cycles.

\section{Examples of stochastic inertia}
\label{sec:examples}

\subsection{Computing the integral of $\Gamma$ along a trajectory}\label{sec:computing}

The central quantity that appears in the formulas of Theorem \ref{thm:stoch_inertia} and Theorem \ref{thm:derivative} is
\begin{equation}
    \int_0^{E_0^M(x)} \Gamma(\phi^t x)\,\dd t,
\end{equation}
where
\begin{equation}\label{eq:recall_Gamma}
        \Gamma(x)
    = \operatorname{trace} \left( G(x)^\top \left[\operatorname{Hess} E_0^M(x) \right]  G(x)\right).
\end{equation}
In the case of isotropic additive noise, that is $G(x) = \Id$, the term $\Gamma(x)$ is simply $\Delta E_0^M(x)$.
In this section, we present an algorithmic method to compute this integral for a given starting point. The formulas that we arrive at are justified in Appendix \ref{app:derivation}.

We assume that $x\in M$ leaves the domain non-tangentially in a point $\theta \in \partial M$ and that
$\partial M$ is twice continuously differentiable around $\theta$. By Theorem \ref{thm:stoch_inertia}, we know that $E_0^M$ is twice differentiable along the deterministic trajectory. We first compute this trajectory up to the exit point, and then compute the gradient and Hessian of $E_0^M$ backward along it. The integral is computed together with these derivatives.

To compute the Hessian of $E_0^M$ at points along the trajectory, we need a way to characterize the boundary of $M$. Let $N:\R^d \to \R$ be a function with the following properties:
\begin{enumerate}[(i)]
    \item $N$ is negative inside of $M$, positive outside of $M$ and $N=0$ on the boundary $\partial M$.
    \item $N$ is twice continuously differentiable near $\theta$ and $\nabla N(\theta) \neq 0$.
\end{enumerate}
A natural choice for $N$ would be the signed distance to the boundary $\partial M$, where points inside of $M$ get a negative sign. We emphasize that any choice of $N$ with these properties leads to the same value of the integral. To perform the computation, we need a way to evaluate the following functions:
\begin{itemize}
    \item The velocity field $v$, together with its first two derivatives $Dv$ and $D^2v$;
    \item The diffusion matrix $G$;
    \item A boundary-defining function $N$, together with its first two derivatives $\nabla N$ and $\operatorname{Hess} N$.
\end{itemize}
Given a way to evaluate these functions, we proceed as follows:\\

\noindent
\textbf{1. Find the exit point $\theta$.}
Solve the deterministic equation
\begin{equation}
    \partial_t X^t=v(X^t), \qquad X^0=x,
\end{equation}
until the trajectory first reaches $\partial M$. Since this is also the first time that $N(X^t)=0$, the exit time can be computed accurately using e.g.~Newton's method. Record the exit time $T=E_0^M(x)$, the exit point $\theta=X^T$, and a sequence of points $X^t$ for $0\leq t\leq T$. This sequence of points will be used for a quadrature later on, so they should be chosen sufficiently dense. Every point $X^t$ with $0\leq t<T$ has the same exit point $\theta$ and remaining exit time $T-t$.\\

\noindent
\textbf{2. Evaluate the boundary data.}
Near $\theta$, the boundary is described by the twice differentiable function $N$ with $N=0$ on $\partial M$. Set
\begin{equation}
    n=\nabla N(\theta),\qquad b=v(\theta),\qquad c=n^\top b,
\end{equation}
and write
\begin{equation}
    a=Dv(\theta)^\top n,\qquad P=\Id-\frac{bn^\top}{c}.
\end{equation}
The non-tangential exit assumption ensures that $c>0$. The matrix $P$ maps a displacement to the corresponding change in the exit point. In particular, $P$ is the projection onto the tangent plane of $\partial M$ in $\theta$ along the direction $b$.

The gradient and Hessian of $E_0^M$ at the exit point are given by
\begin{align}\label{eq:boundary_derivatives}
    g(T)&=-\frac{n}{c},\\
    H(T)&=-\frac{P^\top \operatorname{Hess} N(\theta) P}{c}
        +\frac{na^\top+an^\top}{c^2}
        -\frac{a^\top b}{c^3}nn^\top.
\end{align}
If the boundary $\partial M$ is flat around $\theta$, then $N$ can be chosen affine near $\theta$. This implies that $\operatorname{Hess} N(\theta)=0$ and the first term in $H(T)$ vanishes.\\

\noindent
\textbf{3. Backtrack the Hessian of $E_0^M$.}
The vector $g(t)$ and the symmetric matrix $H(t)$ describe the gradient and Hessian of $E_0^M$ at $X^t$. Starting from the boundary data in Step~2, solve
\begin{nalign}[eq:derivative_transport]
    \partial_t g(t)&=-Dv(X^t)^\top g(t),\\
    \partial_t H(t)&=-Dv(X^t)^\top H(t)-H(t)Dv(X^t)
        -\sum_{k=1}^d g_k(t)\operatorname{Hess}v_k(X^t),
\end{nalign}
backward from $t=T$ to $t=0$. Here, $\operatorname{Hess}v_k$ is the Hessian of the $k$-th component of the velocity field. The coefficients are evaluated using the trajectory recorded in Step~1.\\

\noindent
\textbf{4. Combine the terms and integrate.}
The integrand at time $t$ is given by
\begin{equation}\label{eq:Gamma_computation}
    \Gamma(X^t)=\operatorname{trace}\left(G(X^t)^\top H(t)G(X^t)\right).
\end{equation}
In the previous steps, we have computed $H(t)$ at a sequence of times $0\leq t\leq T$, which allows us to compute $\Gamma(X^t)$ along these times. Numerical quadrature yields an approximation to the integral
\begin{equation}
    \int_0^{E_0^M(x)}\Gamma(\phi^t x)\,\dd t.
\end{equation}

\subsection{Stochastic inertia around fixed points}\label{sec:fixed_point}

Assume that the deterministic dynamics has a fixed point $x_0\in \R^d$, i.e.~$v(x_0)=0$, and let $M\subset \R^d$ be an open neighbourhood of $x_0$. We want to study the expected escape time $E_\sigma^M(x)$ for $x\in M$ and determine whether there is stochastic inertia. Note that in order for $E_0^M(x)$ to be finite around $x_0$, the fixed point must have at least one unstable direction. To simplify the system, we consider the linearization of $v$ around $x_0$. For a linear system it makes sense to assume w.l.o.g.~that $x_0=0$. Hence, we consider the vector field $v(x)=Ax$, for $A\in \R^{d\times d}$ a matrix, and $M\subset\R^d$ an open neighbourhood of $0$. We present three examples which all support the following heuristic.
\begin{heuristic}
    We expect stochastic inertia around a fixed point if there is a single direction that dominantly leads the escape dynamics from the domain.
\end{heuristic}
Example \ref{ex:ball} shows that if there are multiple unstable dimensions, we can in general not expect stochastic inertia. Example \ref{ex:cube} indicates that it is still possible to find stochastic inertia, even when having multiple unstable dimensions. In this example, the shape of $M$ is chosen such that for each starting point only one of the unstable dimensions is significant for the escape from $M$. Lastly, Example \ref{ex:dominant} shows that for fixed points with a dominant unstable direction, we find stochastic inertia for initial conditions close to the origin, regardless of the shape of $M$.

\begin{example}\textbf{(Escape from a ball with $\mathbf{d}$ unstable dimensions)} \label{ex:ball}

    Let $\lambda >0$ and consider the linear vector field given by the matrix $A=\lambda \Id\in\R^{d\times d}$. The vector field corresponds to uniform expansion at rate $\lambda$ such that the origin is an unstable fixed point with $d$ unstable dimensions. Let $r>0$ and let $M=B_r(0)$ be the open ball around $0$ with radius $r$. Note that $\norm{\phi^t x} = e^{\lambda t} \norm{x}$. For $x\in M$, we compute
    \begin{equation}
        E_0^M(x) = \inf\{t>0\mid \lVert \phi^tx\rVert \geq r \} = \lambda^{-1} \big( \log(r) - \log\lVert x \rVert\big).
    \end{equation}
    Any $x^*\in M \setminus \{0\}$ leaves $M$ non-tangentially in a finite time. Additionally $\partial M$ is smooth everywhere and it is easy to check that the linear vector field $v$ satisfies Assumption \ref{ass:no_stable}. We choose isotropic additive noise, that is $G(x) = \Id$, which naturally satisfies Assumption \ref{ass:uniformly elliptic}. Hence, the derivative $\mathcal{D}(x^*)$ of $E_\sigma^M(x^*)$, as defined in \eqref{eq:derivative_sigma}, can be computed using Theorem \ref{thm:derivative} and determines whether there is stochastic inertia in $x^*$. We compute
    \begin{align}
        E_0^M(x) &= \lambda^{-1} \log(r)- \frac{1}{2} \lambda^{-1} \log\norm{x}^2,\\
        \nabla E_0^M(x) &=- \lambda^{-1} \norm{x}^{-2} x, \\
        \Delta E_0^M (x) &= -\lambda^{-1}\sum_{i=1}^d \big(-2\norm{x}^{-4} x_i^2 + \norm{x}^{-2} \big) \\
        &= \lambda^{-1} (2-d) \norm{x}^{-2}.
    \end{align}
    By Theorem \ref{thm:derivative} the derivative $\mathcal{D}(x^*)$ is given by
    \begin{equation}
        \mathcal{D}(x^*) = \int_0^{E_0^M(x^*)} \Delta E_0^M(\phi^tx^*) \,dt = \lambda^{-1} (2-d) \norm{x^*}^{-2} \int_0^{E_0^M(x^*)}e^{-2\lambda t}  \,dt.
    \end{equation}
    Note that the sign of $\mathcal{D}(x^*)$ depends solely on the dimension $d$. For $d=1$, there is always stochastic inertia, while for $d>2$, there never is stochastic inertia. For $d= 2$, we have $\mathcal{D}(x^*) = 0$ for all $x^*\in M\setminus \{0\}$, and we cannot make an a priori statement about stochastic inertia (further analysis shows that there is no stochastic inertia for this system in two dimensions). Additionally, we observe that for $d=1$, the derivative $\mathcal{D}(x^*)$ gets larger for initial conditions close to $0$. This agrees with the heuristic derivation of \cite{dorrington2023interaction} which states that the effect of stochastic inertia is stronger close to an unstable fixed point. \hfill $\Diamond$
\end{example}

\begin{example}\textbf{(Escape from a cube with $d$ unstable directions)} \label{ex:cube}

    As in the previous example, let $\lambda>0$, define $A=\lambda \Id \in \R^{d\times d}$, and choose isotropic additive noise $G(x) = \Id$. This time we consider escape from the open cube $M=(-r,r)^d$ for $r>0$. Since $\phi^t x = e^{\lambda t} x$, the deterministic escape time is given by
    \begin{equation}
        E_0^M(x) = \lambda^{-1} \big(\log(r) - \log\big(\max_{i=1,\hdots,d} \abs{x_i}\big) \big).
    \end{equation}
    Let $x^* \in M \setminus \{0\}$ for which there is a unique $j\in\{1, \hdots, d\}$ such that $|x^*_j| = \max_{i=1,\hdots,d} \abs{x_i^*}$. Otherwise, the deterministic trajectory of $x^*$ leaves $M$ in a corner, in which $\partial M$ is not differentiable. Hence, $x^*$ leaves $M$ non-tangentially. In a small neighbourhood of $x^*$, the deterministic escape time only depends on the $j$-th coordinate. We conclude
    \begin{align}
        E_0^M(x) &= \lambda^{-1} \big( \log(r) - \log|x_j|\big), \\
        \nabla E_0^M(x) &= - \lambda^{-1} x_j^{-1} e_j, \\
        \Delta E_0^M(x) &= \lambda^{-1} x_j^{-2}.
    \end{align}
    Since $\phi^t$ scales all dimensions uniformly, the $j$-th coordinate is by absolute value the largest coordinate of $\phi^tx^*$ for all $t\geq 0$. We compute $\mathcal{D}(x^*)$ with the help of Theorem \ref{thm:derivative}.
    \begin{equation}
        \mathcal{D}(x^*) = \int_0^{E_0^M(x^*)} \Delta E_0^M(\phi^tx^*) \,dt = \lambda^{-1}  (x_j^*)^{-2} \int_0^{E_0^M(x^*)}e^{-2\lambda t}  \,dt.
    \end{equation}
    This shows that $\mathcal{D} (x^*)>0$, and therefore, there is stochastic inertia in all $x^* \in M \setminus \{0\}$ for which there is a unique largest coordinate. This holds independently of the number of dimensions $d$.   \hfill $\Diamond$
\end{example}

The only difference between Example \ref{ex:ball} and Example \ref{ex:cube} is the shape of the domain $M$. However, in the former example the existence of stochastic inertia depends on the number of dimensions $d$ while in the second example the number of dimensions does not matter. This is because for the escape from a cube, only the value of one coordinate is relevant for the escape time. Hence, in the small noise limit escape from a cube is effectively a one-dimensional problem, and Example \ref{ex:ball} states that in one dimension there always is stochastic inertia. This supports the heuristic that we expect stochastic inertia if there is a single unstable dimension which is relevant for the escape time. We clarify this heuristic in another example. The essence of the following example is that if there is a dominant unstable dimension, then there is stochastic inertia for most initial conditions close to the origin.

\begin{example}\textbf{(Escape with a dominant unstable direction)}\label{ex:dominant}
    Let $A\in \R^{d\times d}$ be a matrix with a simple dominant eigenvalue, meaning that there is a real simple eigenvalue $\lambda_1>0$ which is larger than the real part of any other eigenvalue. 
    For simplicity,  we choose isotropic additive noise, that is $G(x) = \Id$. 
    Let $M\subset \R^d$ be an open, bounded neighborhood of the origin. We argue that there is stochastic inertia in almost all initial conditions sufficiently close to the origin. To keep the argument short, we deliberately give only a heuristic argument rather than a rigorous proof.
     
    Let $r_1$ and $\ell_1$ be the right and left eigenvectors of the matrix $A$ corresponding to the dominant eigenvalue $\lambda_1$. We scale $\ell_1$ and $r_1$ such that $\abs{r_1}=1$ and $\ell_1^\top r_1 = 1$. The matrix $\Pi_1 \coloneqq r_1 \ell_1^\top$ is the linear projection onto $r_1$ along the subspace of vectors that grow slower than $e^{t\lambda_1}$ under $\exp(tA)$. Let $x\in \R^d$ with $\Pi_1 x \neq 0$. For $\epsilon>0$ sufficiently small, the initial condition $\epsilon x $ lies inside of $M$. We compute the derivative $\mathcal{D}(\epsilon x)$ using the scheme derived in Section \ref{sec:computing}.

    The trajectory of the initial condition $\epsilon x$ is given by $\phi^t (\epsilon x) = \epsilon \exp(tA) x$. In particular, there is the splitting
    \begin{equation}\label{eq:splitting}
        \phi^t(\epsilon x) = \epsilon e^{\lambda_1 t} \Pi_1 x + \epsilon \exp(tA) (\Id-\Pi_1) x.
    \end{equation}
    As $\epsilon$ goes to 0, the deterministic escape time $T_\epsilon\coloneqq E_0^M(\epsilon x)$ grows to $\infty$. For large $t$, the ratio of the dominant eigenvalue of $\exp(tA)$ to the rest of the spectrum grows exponentially. Hence, the vector $\phi^t (\epsilon x)$ quickly aligns with the direction of~$r_1$. In particular, the exit point $\theta_\epsilon$ is exponentially close to a multiple of $r_1$ and the exit time $T_\epsilon$ is of order $-\lambda_1^{-1}\log(\epsilon \abs{\Pi_1 x})$.
    
    Let $\epsilon$ be sufficiently small such that $\theta_\epsilon$ is well approximated by a multiple of $r_1$. In the following, we omit the dependence on $\epsilon$. Recall the notation from Step 2 of Section \ref{sec:computing},
    \begin{equation}
        n=\nabla N(\theta),\qquad b=v(\theta),\qquad c=n^\top b,\qquad
        a=Dv(\theta)^\top n,\qquad P=\Id-\frac{bn^\top}{c}.
    \end{equation}
    Using this notation, the Hessian of $E_0^M$ at $\theta$ is given by \eqref{eq:boundary_derivatives},
    \begin{align}\label{eq:recall_H_T}
        H(T) =-\frac{P^\top \operatorname{Hess} N(\theta) P}{c}
        +\frac{na^\top+an^\top}{c^2}
        -\frac{a^\top b}{c^3}nn^\top.
    \end{align}
    Under the assumption that $\theta$ is approximately a multiple of $r_1$, one can verify that
    \begin{equation}\label{eq:approximation_HT}
        r_1^\top H(T)\, r_1 \approx \frac{1}{\lambda_1 \abs{\theta}^2}>0.
    \end{equation}
    Indeed, the first term of \eqref{eq:recall_H_T} vanishes since $P\theta \approx 0$. The second term is approximately $2\lambda_1^{-1}\abs{\theta}^{-2}$ while the last term is approximately $-\lambda_1^{-1}\abs{\theta}^{-2}$. We conclude that for $\epsilon$ sufficiently small, the term $r_1^\top H(T) r_1 >0$.
    
    Since the vector field $v(x) = Ax$ is linear, the evolution of the Hessian $H(t)$ in \eqref{eq:derivative_transport} is
    \begin{align}
        \partial_t H(t) = -A^\top H(t) - H(t) A.
    \end{align}
    Given the terminal matrix $H(T)$, the solution for $0\leq t \leq T$ is computed by 
    \begin{equation}
        H(t) = \exp((T-t) A)^\top H(T) \exp((T-t) A).
    \end{equation}
    Since the diffusion matrix is chosen to be the identity, $G = \Id$, we have $\Gamma(\phi^t(\epsilon x)) = \operatorname{trace}(H(t))$. For $(T-t)$ large, the matrix $\exp((T-t) A)$ has an exponentially dominant eigenvalue $e^{\lambda_1(T-t)}$. Thus, the dominant term in the trace of $H(t)$ is
    \begin{equation}
        \sum_{k=1}^d e_k^\top \Pi_1^\top \exp((T-t)A)^\top H(T) \exp((T-t)A) \Pi_1 e_k.
    \end{equation}
    Note that $\Pi_1 e_k = r_1 \ell_1^\top e_k$. Using the approximation \eqref{eq:approximation_HT}, the dominant term in the trace is
    \begin{equation}
        \sum_{k=1}^d (\ell_1^\top e_k)^2 e^{2\lambda_1 (T-t)} r_1^\top H(T)\, r_1 \approx e^{2\lambda_1 (T-t)} \frac{\abs{\ell_1}^2}{\lambda_1 \abs{\theta}^2} >0.
    \end{equation}
    This term is positive and grows exponentially in $(T-t)$. As $\epsilon$ goes to $0$, the integral of $\Gamma(\phi^t(\epsilon x))$ predominantly integrates over times $t$ for which $(T-t)$ is large and the integral becomes increasingly positive, indicating stochastic inertia.    
    \hfill $\Diamond$
\end{example}

\begin{remark}\label{rmk:finite_observability}
    This section presented examples of linear systems for which the derivative $\mathcal{D}(x)$ is positive, indicating stochastic inertia. This, however, makes no statement about how much the average escape time increases under the addition of noise. It might be the case that $\mathcal{D}(x)$, the first $\sigma$-derivative of $E_\sigma^M(x)$, is large, but the second derivative is extremely negative. Then, the total increase in the average escape time due to the addition of noise is small, and stochastic inertia is only observable for very small values of $\sigma$. Exactly this phenomenon occurs close to the origin (the deterministic fixed point) of several examples above. In these examples, the derivative $\mathcal{D}(x)$ grows as $x$ approaches the origin, indicating strong stochastic inertia. On the other hand, the range of $\sigma$ for which $E_\sigma^M(x)>E_0^M(x)$ shrinks as $x\to 0$. This is because even small noise accelerates the escape from a fixed point (which has an infinite deterministic escape time). A similar phenomenon occurs when we perturb the boundary of $M$. Comparing Example \ref{ex:ball} and Example \ref{ex:cube} shows that the curvature of the boundary can influence the sign of $\mathcal{D}(x)$. Thus, even small perturbations of $\partial M$ can result in large changes of $\mathcal{D}(x)$. On a macroscopic level, i.e., for $\sigma$ not infinitesimally small, such boundary perturbations should have little influence on the average escape. We conclude that while $\mathcal{D}(x)>0$ proves the existence of a range $(0,\sigma^*)$ for which $E_\sigma^M(x) >E_0^M(x)$, additional information is needed to determine how large $\sigma^*$ is. 
\end{remark}

\subsection{Relaxation oscillators}
\label{ssec:relaxation_oscillator}

Relaxation oscillations are widely studied phenomena in multiple time scale dynamical systems~\cite[Section~5.2]{KuehnBook}. They describe periodic behavior where slow relaxation at quasi-equilibria alternates with fast excursions. These fast excursions typically stem from an instability arising from the passage around a non-hyperbolic fold point (in contrast to the hyperbolic fixed points from the previous section), or, put differently, the slow dynamics follow a normally hyperbolic regime that is lost near a singularity that produces fast outbursts.

A prototypical relaxation oscillator is given by the van der Pol equations~\eqref{eq:van_der_pol}, introduced in~\cite{van1920theory} in the context of electrical circuits. These contain a parameter $\varepsilon>0$ separating time scales and giving rise to the ``relaxation'' and ``impulsive'' sections of the limit cycle. 
The interaction between the parameter $\varepsilon \ll 1$ and the noise strength $\sigma$ can induce various kinds of behaviors; see~\cite[Section~6.1.2]{BerglundGentz2006} for an analysis near the bifurcation points, and, in particular, \cite{BerglundGentzKuehn} for noise-induced earlier escape around a fold singularity, as present in relaxation oscillators, in a three-dimensional fast-slow system with noise rescaled also in terms of $\varepsilon$.

We focus on the basic dynamics of the
planar van der Pol oscillator, given by the following ODE on $\R^2$
\begin{nalign}[eq:van_der_pol]
    \dot x &= y, \\
    \dot y &= \tfrac{1}{\varepsilon}(1-x^2)y - x.
\end{nalign}
Writing \eqref{eq:van_der_pol} as $\ddot x - \tfrac{1}{\varepsilon}(1-x^2)\dot x + x = 0$
and introducing the Li\'enard variable $w = \varepsilon y - x + \tfrac{x^3}{3}$ gives the
slow--fast system
\begin{nalign}[eq:van_der_pol_lienard]
    \varepsilon \dot x &= w + x - \tfrac{x^3}{3}, \\
    \dot w &= -\varepsilon x,
\end{nalign}
with critical manifold
\[
    S_0 = \left\{ (x,w) : w = \tfrac{x^3}{3} - x \right\}.
\]
Normal hyperbolicity is lost where $\partial_x\!\left(w + x - \tfrac{x^3}{3}\right) = 1 - x^2 = 0$,
that is, at $x = \pm 1$. The two fold points are therefore
\[
    (x,w) = \left(1, -\tfrac{2}{3}\right) \quad \text{and} \quad (x,w) = \left(-1, \tfrac{2}{3}\right),
\]
which separate the attracting branches ($|x| > 1$) from the repelling branch ($|x| < 1$).
In the original coordinates, $\varepsilon y = w + x - \tfrac{x^3}{3}$ vanishes at both points,
so the fold points are $(x,y) = (\pm 1, 0)$.

We set $\varepsilon=0.1$, and consider two domains $M_\textnormal{s}$ and $M_\textnormal{f}$ around one of the slow segments and around one of the fast segments, respectively; see Figure~\ref{fig:phase_portrait}. Due to the shape of the respective segments, we choose $M_\textnormal{s}$ to be a rectangle in original coordinates, while we choose $M_\textnormal{f}$ to be a rectangle in $(x,w)$-coordinates. Let $\Phi_\varepsilon:\R^2 \to \R^2$ be the coordinate transform from $(x,y)$ to $(x,w)$, that is
\begin{equation}
    \Phi_\varepsilon(x,y) = \big(x,\varepsilon y - x + \tfrac{x^3}{3}\big).
\end{equation}
This map is a bijection. We define
\begin{align}
    M_\textnormal{s}&=(-2.1,-0.8)\times(-2,2), \\ M_\textnormal{f}&=\Phi_\varepsilon^{-1}\left((-0.5,1.1)\times(-0.95,-0.5)\right).
\end{align}
On $M_\textnormal{s}$, the dynamics are attracted to the slow manifold which then governs the dynamics before the fold point is approached. The regime $M_\textnormal{f}$ includes the fold point $(1, 0)$ and parts of the fast segment after the fold point.
\begin{figure}%
    \centering
    \includegraphics[width=0.45\linewidth]{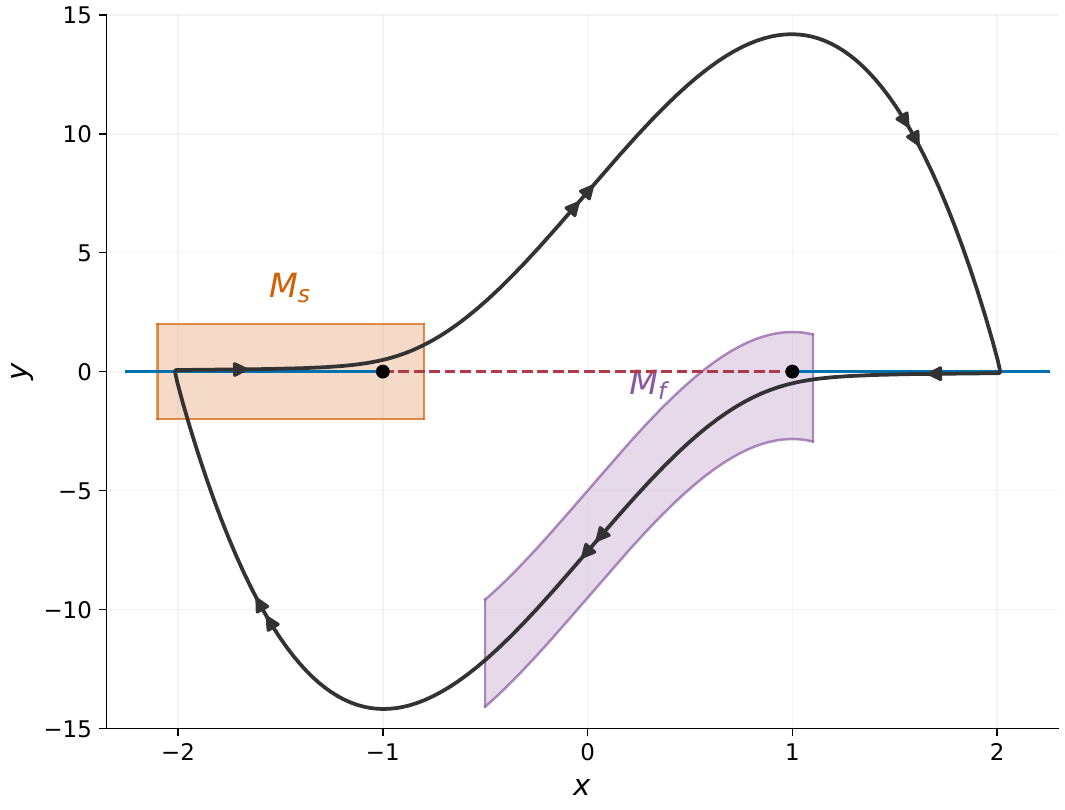}
    \hspace{0.05\linewidth} 
    \includegraphics[width=0.45\linewidth]{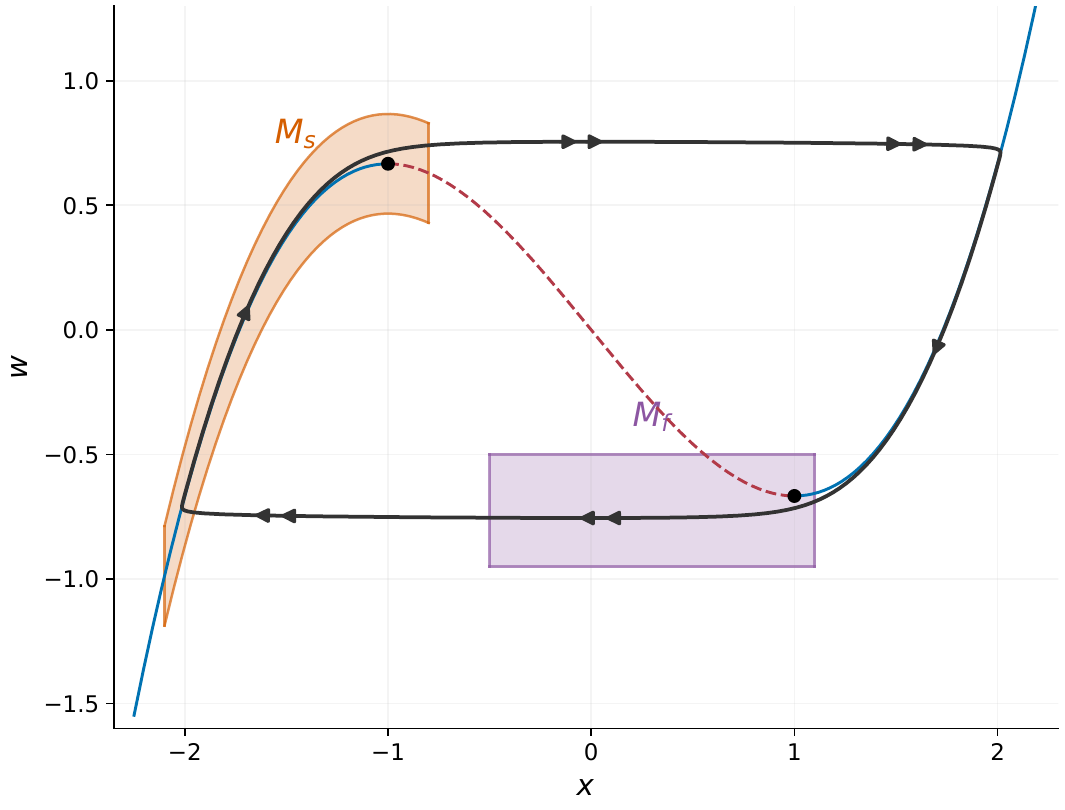}
    \caption{Limit cycle of the van der Pol oscillator. The left image is in original coordinates and corresponds to \eqref{eq:van_der_pol}. The right image is in Li\'enard-coordinates and corresponds to \eqref{eq:van_der_pol_lienard}. The stable branch of the critical manifold is drawn as a solid blue line and the unstable branch of the critical manifold is drawn as a dashed red line. A single arrow indicates a slow region and a double arrow indicates a fast region. The regimes $M_\textnormal{s}$ and $M_\textnormal{f}$ are chosen to be rectangles in their respective coordinates around one of the slow sections and around one of the fast sections of the dynamics. The fold points are marked as black dots.}
    \label{fig:phase_portrait}
\end{figure}
\begin{figure}
    \centering
    \includegraphics[width=0.95\linewidth]{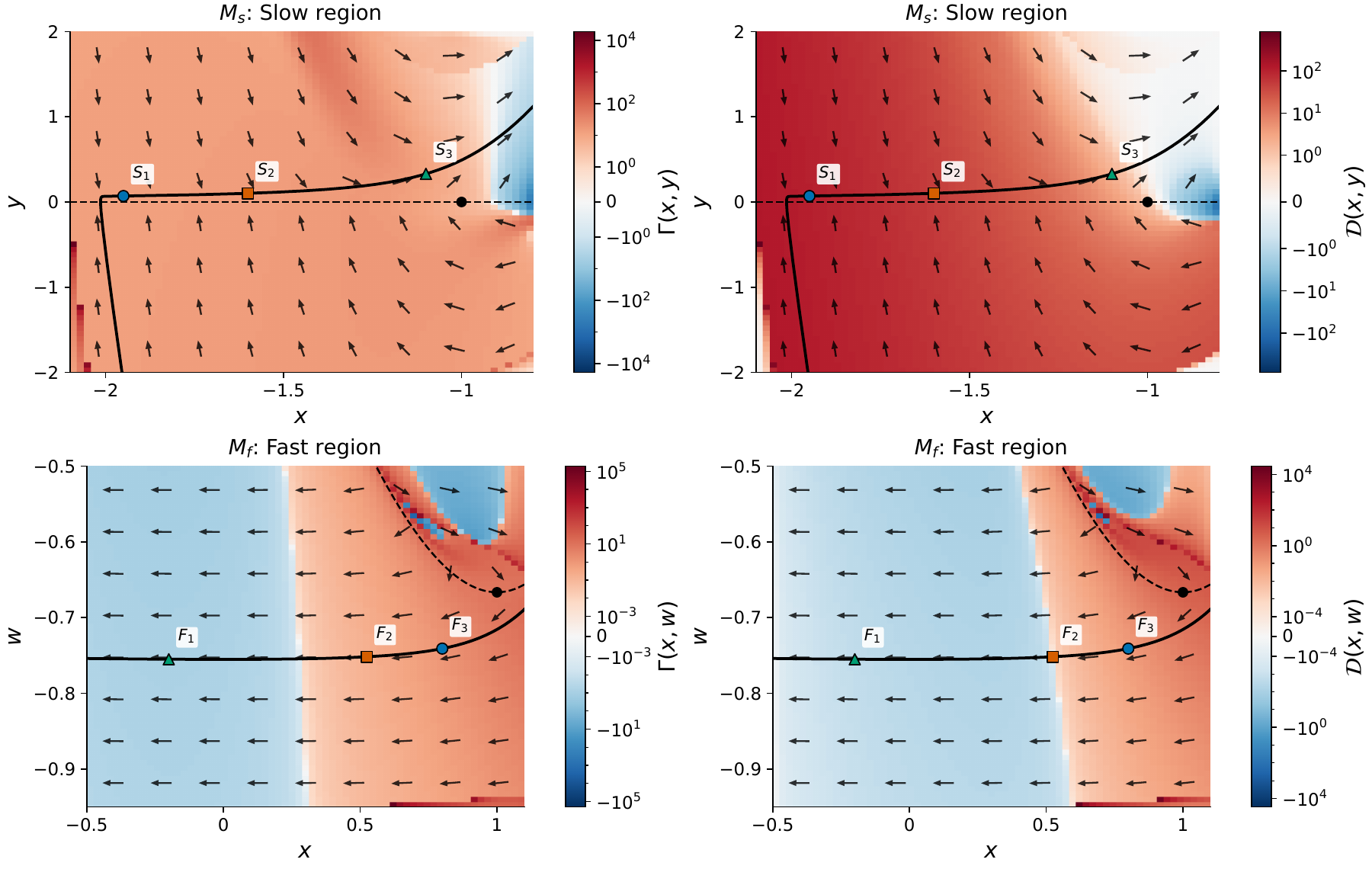}
    \caption{Heatmap representations of the functions $\Gamma$ and $\mathcal{D}$ over the domains~$M_\textnormal{s}$ (top) and $M_\textnormal{f}$ (bottom). Note the different signed logarithmic scalings in the colorbars between the top and the bottom row. Also, note the changed aspect ratio compared to Figure~\ref{fig:phase_portrait}. The small arrows indicate the direction of the velocity field. The limit cycle is shown as a solid black line. The critical manifold $S_0$ is drawn as a dashed line. The points $S_i$ and $F_i$ are used as initial points for a Monte Carlo approximation of the average escape times in Figure~\ref{fig:escape_times}.}
    \label{fig:heatmaps}
\end{figure}

\begin{figure}
    \centering
    \includegraphics[width=0.45\linewidth]{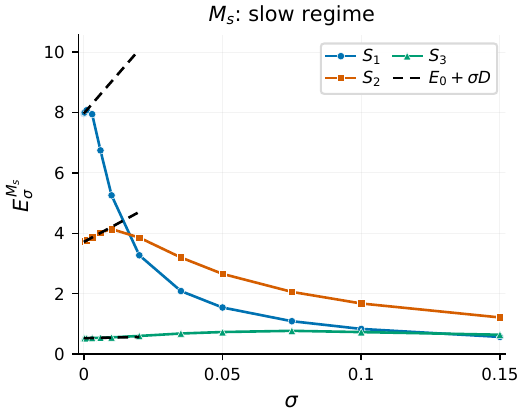} \hspace{0.05\linewidth}
    \includegraphics[width=0.45\linewidth]{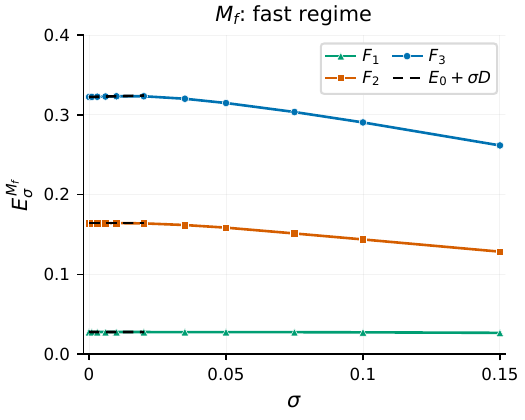}
    \caption{Approximation of the average exit time $E_\sigma^M$ using a Monte Carlo simulation of $5\times10^4$ realisations, for the slow regime $M_\textnormal{s}$ (left) and the fast regime $M_\textnormal{f}$ (right). The initial conditions $S_i$ and $F_i$ are the points shown in Figure~\ref{fig:heatmaps}. For each point, a dashed tangent line starting at $E_0^M$ with slope $\mathcal{D}$ is shown.}
    \label{fig:escape_times}
\end{figure}

We add isotropic additive noise to \eqref{eq:van_der_pol}, turning it into an SDE
\begin{equation}\label{eq:vdp_sde}
    \dd \begin{pmatrix}
        x \\ y
    \end{pmatrix} = \begin{pmatrix}
        y \\ \varepsilon^{-1}(1-x^2)y - x
    \end{pmatrix}\, \dd t + \sqrt{2 \sigma} \, \dd W^t.
\end{equation}
Note that for $M_\textnormal{f}$ we cannot simply work in $(x,w)$-coordinates, since the It\^o-correction term makes the velocity field $\sigma$-dependent. For this reason, we restrict our computations to $(x,y)$-coordinates but display the results for $M_\textnormal{f}$ in $(x,w)$-coordinates.

The velocity field and the diffusion matrix of the SDE \eqref{eq:vdp_sde} satisfy Assumptions~\ref{ass:no_stable} and~\ref{ass:uniformly elliptic}. Hence, Theorem \ref{thm:derivative} provides a formula that determines whether there is stochastic inertia in a given initial condition $X^0$ in $M_\textnormal{s}$ or $M_\textnormal{f}$. For both regimes, we compute the function $\Gamma$, as well as the derivative $\mathcal{D}$, using the method described in Section~\ref{sec:computing}. The results are depicted in Figure~\ref{fig:heatmaps}. 

For the slow regime the pictures show that the derivative $\mathcal{D}$ is positive for most points. This is in accordance with the heuristic of Section \ref{sec:fixed_point}: there is a single direction that dominantly leads the escape dynamics from the domain. After the fold point, $\Gamma$ becomes negative, and there is no longer stochastic inertia. For the fast regime, there is effectively no stochastic inertia. Most points along the limit cycle have values of $\Gamma$ that are close to~$0$. Additionally, since the dynamics is faster, the integrated quantity $\mathcal{D}$ is generally smaller than in the slow regime. In comparison to the slow regime, many points in $M_\textnormal{f}$ take negative values of $\mathcal{D}$, indicating the absence of stochastic inertia. 

To verify our results, we approximate $E_\sigma^M$ for three initial conditions in $M_\textnormal{s}$ and $M_\textnormal{f}$, respectively, with a Monte Carlo simulation. For 12 values of $\sigma$ between 0 and 0.15, we average over $5\times10^4$ realizations of the random exit time $\tau_\sigma^M$. Each realization is computed using the stochastic Heun integration method with a step size of at most~$5\times 10^{-4}$. The resulting graphs of $\sigma \mapsto E_\sigma^M$ are shown in Figure~\ref{fig:escape_times}. The points $S_2$ and $S_3$ clearly show stochastic inertia. In the point $S_1$, there is an initial increase in the average escape time with a steep slope, but the effect of stochastic inertia is only observable for small values of $\sigma$, see Remark~\ref{rmk:finite_observability} for possible explanations. In the fast regime $M_\textnormal{f}$, only $F_3$ shows signs of stochastic inertia, but the effect is negligible. The other two points $F_1$ and $F_2$ show no stochastic inertia. For all six points, the computed derivatives $\mathcal{D}$ align almost perfectly with the slope of $E_\sigma^M$ in $\sigma = 0$. Thus, the formula for $\mathcal{D}$ is a reliable way to predict stochastic inertia in this example.

The code used to create the figures of this section can be found at
\begin{center}
    \url{https://github.com/RobinChemnitz/Stochastic-Inertia}.
\end{center}

\begin{remark}[Inverse stochastic resonance]
    Relaxation oscillators also often appear in neuroscience to describe the spiking behavior of neurons. It is worth pointing out that in this context, a phenomenon similar to stochastic inertia was observed: For moderate but non-zero levels of noise, the spiking rate of the neurons is decreased. This led to the name \emph{inverse stochastic resonance}~\cite{gutkin2009inhibition,tuckwell2012analysis,yamakou2017simple,kuehn2017quenched,lu2020inverse}.
    It is beyond our scope to analyze precise connections between stochastic inertia and inverse stochastic resonance; however, we outline a possible direction in Section~\ref{ssec:limit_cycle}.
\end{remark}

\subsection{Discontinuity at $\sigma=0$}
\label{ssec:discontinuity}

We demonstrate by virtue of an example that without Assumption \ref{ass:no_stable}, the map $\sigma \mapsto E_\sigma^M(x)$ may have a discontinuity at $\sigma=0$.
\begin{example}\textbf{(Discontinuity of $E_\sigma^M(x)$ at $\sigma=0$)}\label{ex:discontinuity}
    Consider the SDE in a potential landscape on $\R$
    \begin{equation}
        \dd X_\sigma^t = -\nabla U(X_\sigma^t) \, \dd t + \sqrt{2\sigma} \, \dd W^t.
    \end{equation}
    Specifically, we choose the potential
    \begin{equation}\label{eq:potential}
        U(x) := -2x^4 + x^2 + 1.
    \end{equation}
    See Figure \ref{fig:potential} for a graph of $U$. We consider escape from the interval $M:= (-1, 1)$. The potential $U$ has a valley in the interval $[-0.5, 0.5]$, and the vector field $v$ has one stable fixed point in $0$ and two unstable fixed points in $\pm 0.5$. The key values of $U$ are given by
    \begin{equation}
        U(0) = 1, \qquad U(\pm 0.5) = 1.125, \qquad U(\pm 1) = 0.
    \end{equation}
    The deterministic escape time $E_0^M(x)$ is infinite for $x\in [-0.5, 0.5]$ and else finite. 

    \begin{figure}
        \centering
        \begin{tikzpicture}
        \begin{axis}[
            axis lines=middle,
            xlabel={$x$},
            ylabel={$U(x)$},
            domain=-1:1,
            xmin=-1, xmax=1,
            ymin=0, ymax=1.25,
            samples=100,
            major grid style={line width=.2pt,draw=gray!50},
            enlargelimits=true
        ]
        \addplot[blue, thick] { -2*x^4 + x^2 + 1 };
        \end{axis}
        \end{tikzpicture}
        \caption{The potential $U(x) = -2x^4 + x^2 + 1$.}
        \label{fig:potential}
    \end{figure}

    We investigate the continuity of the map $\sigma \mapsto E_\sigma^M(x)$ in $\sigma = 0$. For $x\in [-0.5, 0.5]$ it is easy to show that $\lim_{\sigma \to 0} E_\sigma^M(x) = \infty$ such that the map is continuous with respect to the compact topology on $\R \cup\{\infty\}$. For $x\in M$ outside of the valley, we use the theory of Freidlin and Wentzell, in particular \cite[Chapter 6, Theorem 5.3]{freidlin1998random} to compute the asymptotic growth of $E_\sigma^M(x)$ as $\sigma \to 0$. Specifically, the theorem provides a formula for
    \begin{equation}
        \lim_{\sigma \to 0} \sigma \log E_\sigma^M(x).
    \end{equation}
    We state the result from the book and provide some intuition for the formula, but refer to \cite[Chapter 6]{freidlin1998random} for more detail.
    
    The key quantity in the formula is $V_M(x,A)$, where $x\in M$ and $A\subset \overline{M}$. In the case of a 1-dimensional potential field, this value describes how much potential needs to be overcome to move from $x$ to the set $A$, while staying inside of $M$, that is
    \begin{equation}
        V_M(x,A) = \inf_{\psi:x\to A} \sup_{t\in [0,1]} U(\psi(t)) - U(x),
    \end{equation}
    where the infimum is taken over all continuous paths $\psi:[0,1]\to \overline{M}$ with $\psi(0)=x$ and $\psi(1) \in A$. In our setting, the formula of \cite[Chapter 6, Theorem 5.3]{freidlin1998random} reads
    \begin{equation}\label{eq:growth_rate_E}
        \lim_{\sigma \to 0} \sigma \log E_\sigma^M(x) = V_M(0, \partial M) - \min\{ V_M(x,0), V_M(0, \partial M)\}.
    \end{equation}

    Note that the right-hand side is always non-negative. This formula has a heuristic interpretation. Let $B:= (-0.5, 0.5)$ be the valley of the potential. The quantity $ V_M(0, \partial M)$ is the exponential rate in $\sigma^{-1}$ at which the expected escape time from the valley grows for $X_\sigma^0 = 0$, i.e.
    \begin{equation}
         V_M(0, \partial M) = \lim_{\sigma \to 0} \sigma \log \E_0[\tau_\sigma^B].
    \end{equation}
    On the other hand, let $V_M(x,0)$ be the rate at which the probability of entering the valley decays for $X_\sigma^0 = x$, i.e.
    \begin{equation}
        V_M(x,0) = -\lim_{\sigma \to 0} \sigma \log \P_x\big(\tau_\sigma^{M\setminus B} < \tau_\sigma^M\big).
    \end{equation}

    The expected escape time from $x\in M$ is at least the probability of entering the valley times the expected escape time from the valley, i.e.~we have \begin{equation}
        \lim_{\sigma \to 0} \sigma \log E_\sigma^M(x) \geq V_M(0, \partial M) - V_M(x,0).
    \end{equation} As long as $V_M(0, \partial M) -  V_M(x,0)$ is non-negative, the lower bound is sharp and \eqref{eq:growth_rate_E} gives the precise exponential growth rate of $E_\sigma^M(x)$.

    For the potential $U$ given by \eqref{eq:potential}, we compute
    \begin{align}
        V_M(0, \partial M)  &= U(\pm0.5) - U(0) = \tfrac{1}{8}, \\
        V_M(x,0) &= \begin{cases}
            0, \quad & |x| < 0.5, \\
            U(\pm0.5) - U(x), \quad &|x| \geq 0.5.
        \end{cases}
    \end{align}
    This lets us compute the asymptotic growth rate of $E_\sigma^M(x)$ with the help of \eqref{eq:growth_rate_E}.
    \begin{equation}
        \lim_{\sigma \to 0} \sigma \log E_\sigma^M(x) = 
        \begin{cases}
        \tfrac{1}{8}, \quad & |x| < 0.5, \\
        U(x) - 1, \quad & |x| \in [0.5, \tfrac{1}{\sqrt{2}}], \\
        0, \quad &|x| > \frac{1}{\sqrt 2}.
        \end{cases} 
    \end{equation}
    This shows that for $x \in M$ with $|x| \in (0.5, \frac{1}{\sqrt 2})$, the deterministic escape time $E_0^M(x)$ is finite, but $E_\sigma^M(x)$ grows exponentially as $\sigma \to 0$. Hence, $\sigma \mapsto E_\sigma^M(x)$ is not continuous in $\sigma=0$ for such $x\in M$. It is possible to show that for $|x|> \frac{1}{\sqrt 2}$ the map $\sigma \mapsto E_\sigma^M(x)$ is continuous. However, this is not the focus of this example. \hfill $\Diamond$
\end{example}

\section{Proofs}\label{sec:proofs}
This section is concerned with the proof of Theorem \ref{thm:stoch_inertia} and Theorem \ref{thm:derivative}. Before we begin with the main part of the proof, we introduce some auxiliary lemmas. Lemma \ref{lem:exp_close_prob} and Lemma \ref{lem:escape_time_convergence} hold without additional assumptions, while Lemma \ref{lem:uniform_in_x} is the key implication of Assumption \ref{ass:no_stable} and Assumption \ref{ass:uniformly elliptic}. All of the results of this section only consider bounded domains of $\R^d$. Therefore, we assume w.l.o.g.~that the velocity field $v$ is globally $C^2$ and the diffusion matrix $G$ is globally Lipschitz.

\subsection{Some helpful lemmas from Freidlin--Wentzell theory}

The first lemma states that as $\sigma \to 0$ the stochastic trajectory converges in probability to the deterministic trajectory uniformly on finite time intervals, and the convergence occurs at an exponential rate which does not depend on the initial condition. The structure of the proof closely follows that of \cite[Chapter 3.1]{freidlin1998random}.

\begin{lemma}\label{lem:exp_close_prob}
   Let $T>0$ and $\epsilon>0$. There are $\gamma>0$ and $C>0$, such that for all $X_\sigma^0 = x\in M$ and $\sigma > 0$, we find 
\begin{equation}\label{eq:P_exp_convergence}
        \P_x\left(\sup_{0\leq t \leq T} |X_\sigma^t - \phi^t x| \geq \epsilon\right) \leq Ce^{-\gamma\sigma^{-1} }.
    \end{equation}
\end{lemma}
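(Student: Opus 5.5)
We follow the classical argument of Freidlin--Wentzell \cite[Chapter 3.1]{freidlin1998random}: reduce the deviation of $X_\sigma^t$ from $\phi^t x$ to the size of a stochastic integral via Gronwall, and then bound that integral with an exponential martingale inequality.

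\textbf{Step 1: Gronwall reduction.} Subtract the integral equations for $X_\sigma^t$ and $\phi^t x$:
\begin{equation}
X_\sigma^t - \phi^t x = \int_0^t \big(v(X_\sigma^s)-v(\phi^s x)\big)\,\dd s + \sqrt{2\sigma}\int_0^t G(X_\sigma^s)\,\dd W^s.
\end{equation}
Let $L$ be the global Lipschitz constant of $v$, which we may take global after the w.l.o.g.\ modification, and set $Z^t := \sqrt{2\sigma}\int_0^t G(X_\sigma^s)\,\dd W^s$. Gronwall's inequality then gives
\begin{equation}
\sup_{0\le t\le T}|X_\sigma^t-\phi^tx| \le e^{LT}\sup_{0\le t\le T}|Z^t|.
\end{equation}
It therefore suffices to bound $\P_x\big(\sup_{t\le T}|Z^t|\ge \epsilon e^{-LT}\big)$ uniformly in $x$.

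\textbf{Step 2: bounded diffusion.} The one point requiring care is that $G$ is only globally Lipschitz, not bounded. Uniform boundedness of $G(X_\sigma^s)$ is exactly what gives an $x$-independent rate, and it has to be arranged before the martingale estimate. We handle it by localization. The set $\overline M$ is compact, and since $\phi^t x\in\overline{M_T}:=\{\phi^s y: y\in\overline M, s\in[0,T]\}$, the deterministic trajectories stay in a compact set. Up to the stopping time $\rho:=\inf\{t: |X_\sigma^t-\phi^t x|\ge\epsilon\}$, the process stays in the compact $\epsilon$-neighbourhood of this set, on which $\|G\|\le K$. We replace $G$ by $G(X_\sigma^{s\wedge\rho})$, which equals $G(X_\sigma^s)$ for $s\le\rho$. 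The event in \eqref{eq:P_exp_convergence} coincides with $\{\rho\le T\}$, and on this event the stopped process has deviation at least $\epsilon$ at time $\rho$. Hence the Gronwall estimate applied to the stopped process suffices. (Alternatively, one could modify $G$ outside this compact set to be bounded without changing the probability.)

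\textbf{Step 3: exponential martingale bound.} Each component $Z^{t\wedge\rho}_i$ is a continuous martingale with quadratic variation at most $2\sigma K^2 t$. Bernstein's exponential martingale inequality, obtained from Doob's maximal inequality applied to $\exp(\lambda Z_i - \tfrac{\lambda^2}{2}\langle Z_i\rangle)$ and optimized in $\lambda$, gives
\begin{equation}
\P\Big(\sup_{t\le T}|Z_i^{t\wedge\rho}|\ge a\Big)\le 2\exp\Big(-\frac{a^2}{4\sigma K^2T}\Big).
\end{equation}
Take a union bound over the $d$ components with $a=\epsilon e^{-LT}/\sqrt d$. This yields \eqref{eq:P_exp_convergence} with $C=2d$ and $\gamma=\epsilon^2e^{-2LT}/(4dK^2T)$, both independent of $x\in M$ and of $\sigma>0$.
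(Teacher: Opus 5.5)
Your proposal is correct and follows the same route as the paper: a Gronwall reduction to the stochastic integral, Bernstein's exponential martingale inequality applied componentwise, and a union bound over the $d$ coordinates. Your localization at the stopping time $\rho$ justifies the uniform bound on $G$ that the paper simply asserts via its w.l.o.g.\ modification. Your $\sqrt d$ union bound gives a slightly better $\gamma$. Both are refinements, not a different argument.
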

\begin{proof}
    By definition of $X_\sigma^t$ and $\phi^tx$, there holds
    \begin{align*}
        X_\sigma^t &= x + \int_0^t v(X_\sigma^s) \; \dd s + \sqrt{2\sigma}\,\int_0^t G(X_\sigma^s) \; \dd W^s,\\
        \phi^tx &= x + \int_0^t v(\phi^s x) \, \dd s.
    \end{align*}
    To simplify notation, we introduce the continuous $\R^d$-valued martingale
    \begin{equation}
        Z^t \coloneqq \int_0^t G(X_\sigma^s) \; \dd W^s.
    \end{equation}
    The difference of the equations of $X_\sigma^t$ and $\phi^t x$ is given by
    \begin{equation}\label{eq:integral_diff}
        X_\sigma^t - \phi^tx = \int_0^t \left[ v(X_\sigma^s) - v(\phi^s x)\right] \, \dd s + \sqrt{2\sigma}\,Z^t.
    \end{equation} 
    The velocity field $v$ is Lipschitz continuous with a constant $L>0$. We conclude
    \begin{equation}\label{eq:inhom_Gronwall}
        |X_\sigma^t - \phi^t x| \leq L \int_0^t  |X_\sigma^s - \phi^s x| \, \dd s +  \sqrt{2\sigma} \left(\sup_{0\leq s \leq T} |Z^s | \right).
    \end{equation}
    A Grönwall-type inequality (cf.~\cite[Chapter 2, Lemma 1.1]{freidlin1998random}) implies that
    \begin{equation}
    \label{eq:BMbound}
        \sup_{0\leq t\leq T} |X_\sigma^t - \phi^t x| \leq \sqrt{2\sigma} \left(\sup_{0\leq t \leq T} |Z^t| \right) e^{L T}.
    \end{equation}
    This inequality provides an upper bound for the probability of $X_\sigma^t$ and $\phi^t x$ differing by at least $\epsilon>0$. 
    \begin{nalign}[eq:lem1_bound]
        \P_x\left( \sup_{0\leq t\leq T} |X_\sigma^t - \phi^t x| \geq \epsilon\right) &\leq \P_x\left(\sup_{0\leq t \leq T} |Z^t| \geq \frac{\epsilon}{\sqrt{2}} e^{-LT}\sigma^{-\frac{1}{2}} \right).
    \end{nalign}
    If the norm of $Z^t$ exceeds $\frac{\epsilon}{\sqrt{2}} e^{-LT}\sigma^{-\frac{1}{2}}$ at some point in time, one of the components of $Z^t$ must exceed $\frac{\epsilon}{d\sqrt{2}} e^{-LT}\sigma^{-\frac{1}{2}}$ at some point in time. The components of $Z^t$ are given by
    \begin{equation}
        Z_i^t = \int_0^t G(X_\sigma^s)_i \, \dd W^s,
    \end{equation}
   where $1\leq i \leq d$ and $G(X_\sigma^s)_i\in \R^d$ is the $i$-th row of the matrix $G(X_\sigma^s)$. The quadratic variation of $Z_i^t$ is given by
   \begin{equation}
       \langle Z_i\rangle_t = \int_0^t \lvert G(X_\sigma^s)_i\rvert^2 \, \dd s.
   \end{equation}
   The matrix $G$ is uniformly bounded and there is a constant $K>0$ such that $\langle Z_i\rangle_t \leq Kt$.
   The exponential martingale inequality, also called Bernstein's inequality (cf.~\cite[Chapter IV]{revuz2013continuous}, in particular Exercise (3.16) therein) states that%
    \begin{equation}
        \P_x\left(\sup_{0\leq t\leq T} |Z_i^t| \geq C \right) \leq 2e^{-\frac{C^2}{2KT}}.
    \end{equation}
    Hence, the probability that the component $Z_i$ exceeds $\frac{\epsilon}{d\sqrt{2}} e^{-LT}\sigma^{-\frac{1}{2}}$ in absolute value at some point in time is upper bounded as
    \begin{align}\label{eq:bound_Z_i}
        \P_x\left( \sup_{0\leq t\leq T} |Z_i^t| \geq \frac{\epsilon}{d\sqrt{2} } e^{-LT}\sigma^{-\frac{1}{2}} \right) \leq 2 e^{-\frac{\epsilon^2}{4KTd^2} e^{-2LT}\sigma^{-1}}.
    \end{align}
    We define the exponent
    \begin{equation}
        \gamma \coloneqq \frac{\epsilon^2}{4KTd^2} e^{-2LT}.
    \end{equation}
    Note that the bound \eqref{eq:bound_Z_i} holds for any $1\leq i \leq d$. By the union bound, the probability of $|Z^t|$ exceeding $\frac{\epsilon}{\sqrt{2}} e^{-LT}\sigma^{-\frac{1}{2}}$ at some point in time is at most $d$ times the probability of the absolute value of an individual component of $Z^t$ exceeding $\frac{\epsilon}{d\sqrt{2}} e^{-LT}\sigma^{-\frac{1}{2}}$ at some point in time. By \eqref{eq:lem1_bound}, we conclude
    \begin{equation}
        \P_x\left( \sup_{0\leq t\leq T} |X_\sigma^t - \phi^t x| \geq \epsilon\right) \leq 2d \,e^{-\gamma\sigma^{-1}}.
    \end{equation}
    This proves the exponential convergence~\eqref{eq:P_exp_convergence}.
\end{proof}

A direct implication of Lemma \ref{lem:exp_close_prob} is exponential convergence in probability of the escape times. 
\begin{lemma}\label{lem:escape_time_convergence}
    Let $X_\sigma^0=x\in M$ be an initial condition that leaves $M$ non-tangentially, and let $\epsilon>0$. There are constants $C>0$ and $\gamma>0$ such that
    \begin{equation}\label{eq:escape_time_convergence}
        \P_x\big(|\tau_\sigma^M - E_0^M(x)| \geq \epsilon\big) \leq C e^{-\gamma \sigma^{-1}}.
    \end{equation}
\end{lemma}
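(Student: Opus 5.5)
We reduce the statement to Lemma \ref{lem:exp_close_prob}, applied on the time horizon $T := E_0^M(x) + \delta$. Here $\delta>0$ and $\epsilon_0>0$ are chosen as in the discussion of non-tangential exit, so that $\operatorname{dist}(\phi^{E_0^M(x)+\delta}x, M) > \epsilon_0$. Write $T_0 := E_0^M(x)$. We may assume $\epsilon < \delta$, since shrinking $\epsilon$ only strengthens \eqref{eq:escape_time_convergence}. The event $\{|\tau_\sigma^M - T_0| \geq \epsilon\}$ is the union of $\{\tau_\sigma^M \geq T_0 + \epsilon\}$ and $\{\tau_\sigma^M \leq T_0 - \epsilon\}$. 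We show that each of these is contained in an event of the form $\{\sup_{0\le t\le T}|X_\sigma^t - \phi^t x| \geq \eta\}$ for some $\eta>0$ depending only on $x$ and $\epsilon$. Lemma \ref{lem:exp_close_prob} then gives the bound $C e^{-\gamma\sigma^{-1}}$, and taking the smaller exponent and the sum of the constants finishes the proof.

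\emph{Late exit.} Non-tangentiality means $v(\theta(x))\cdot n(\theta(x)) > 0$ for the outward normal $n$, and $\partial M$ is $C^2$ near $\theta(x)$. By a first-order Taylor expansion of a local defining function $N$ along the trajectory, $N(\phi^t x) > 0$ for all $t \in (T_0, T_0+\delta]$, possibly after shrinking $\delta$. In particular $\phi^{T_0+\epsilon}x \notin \overline M$. Set $\eta_1 := \operatorname{dist}(\phi^{T_0+\epsilon}x, \overline M) > 0$. If $\sup_{t\le T}|X_\sigma^t - \phi^t x| < \eta_1$, then $X_\sigma^{T_0+\epsilon} \notin M$, and hence $\tau_\sigma^M \le T_0+\epsilon$. (The boundary case $\tau_\sigma^M = T_0+\epsilon$ is absorbed by replacing $\epsilon$ with $\epsilon/2$ in this argument.)

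\emph{Early exit.} The compact arc $K := \{\phi^t x : 0 \le t \le T_0 - \epsilon\}$ lies in the open set $M$, because the trajectory stays in $M$ before time $T_0$. So $\eta_2 := \operatorname{dist}(K, \partial M) > 0$. If $\sup_{t\le T}|X_\sigma^t - \phi^t x| < \eta_2$, then $X_\sigma^t \in M$ for all $t \le T_0-\epsilon$. By continuity of paths, the infimum defining $\tau_\sigma^M$ is then at least $T_0-\epsilon$, and in fact strictly larger, since the path stays in $M$ on a slightly longer interval by continuity and compactness. Taking $\eta := \min(\eta_1,\eta_2)$ and applying Lemma \ref{lem:exp_close_prob} with $T$ and $\eta$ finishes the proof. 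The constants are uniform because that lemma is uniform in the initial condition, although here we only need the single point $x$.

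The only delicate step is the late-exit inclusion. There we must verify that the deterministic path actually leaves $\overline M$ immediately after $T_0$, rather than just touching $\partial M$. This is exactly what non-tangentiality together with $C^2$ regularity of the boundary near $\theta(x)$ provides: the derivative of $t\mapsto N(\phi^t x)$ at $T_0$ equals $\nabla N(\theta)\cdot v(\theta) > 0$. Everything else is a direct consequence of Lemma \ref{lem:exp_close_prob}.
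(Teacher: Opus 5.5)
Your proof is correct and follows essentially the same route as the paper. In both, one picks a time offset smaller than $\epsilon$ and a spatial threshold such that the deterministic path lies outside $\overline M$ just after $E_0^M(x)$ and stays uniformly inside $M$ up to just before it, and then applies Lemma \ref{lem:exp_close_prob} on the horizon $E_0^M(x)+\delta$. You are somewhat more careful than the paper about the boundary cases (strict versus non-strict inequalities, and why $\nabla N(\theta)\cdot v(\theta)>0$ forces the path to leave $\overline M$ immediately), but the argument is the same.
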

\begin{proof}
    Let $x \in M$ which leaves $M$ non-tangentially. There is a sufficiently small value $\delta \in (0, \epsilon)$ for which $\phi^{E_0^M(x) + \delta} x \notin M$. There is $\epsilon'>0$ such that $\text{dist}(\phi^{E_0^M(x) + \delta}x, M)> \epsilon'$ and $\text{dist}(\phi^t x, \partial M)> \epsilon'$ for all $t\in [0, E_0^M(x) - \delta]$. Now, if $\sup_{0\leq t \leq E_0^M(x) + \delta}  |X_\sigma^t  - \phi^t x| < \epsilon'$, then $\tau_\sigma^M \in [E_0^M (x) - \delta, E_0^M(x) + \delta]$. We apply Lemma \ref{lem:exp_close_prob} to the time-interval $T=E_0^M(x)+\delta$ with the threshold $\epsilon'$. We conclude that there are $C>0$ and $\gamma>0$ such that
    \begin{equation}\label{eq:escape_time_convergence_proof}
        \P_x\big(|\tau_\sigma^M - E_0^M(x)| \geq \delta\big) \leq C e^{-\gamma \sigma^{-1}}.
    \end{equation}
    Since $\delta < \epsilon$, the statement follows.
\end{proof}

The next lemma is the key implication of Assumption \ref{ass:no_stable} and Assumption \ref{ass:uniformly elliptic}. It states that the expected escape time cannot grow exponentially as $\sigma$ goes to zero. The statement is similar to \cite[Chapter 6, Theorem 5.3]{freidlin1998random}, which characterizes the exponential growth rates of the expected escape times without Assumption \ref{ass:no_stable}.
\begin{lemma}\label{lem:uniform_in_x}
    Assume the vector field $v$ satisfies Assumption \ref{ass:no_stable} and that $G$ satisfies the uniform ellipticity Assumption \ref{ass:uniformly elliptic}. For each $\gamma>0$ there are $\sigma^* >0$ and $C>0$ such that for all $\sigma \in (0, \sigma^*)$ and all $x\in M$, we have
    \begin{equation}
        E_\sigma^M(x) \leq C e^{\gamma\sigma^{-1}}.
    \end{equation}
\end{lemma}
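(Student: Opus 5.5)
The plan is a uniform ``escape within bounded time with not-too-small probability'' estimate, followed by a geometric-trials argument using the strong Markov property. Concretely, I aim to show the following. For every $\gamma>0$ there are $T>0$ and $\sigma^*>0$ such that
\begin{equation}
\inf_{y\in \overline M}\P_y\big(\tau_\sigma^M\leq T\big)\geq e^{-\gamma\sigma^{-1}} \qquad \text{for all } \sigma\in(0,\sigma^*).
\end{equation}
Given this, the Markov property at times $T,2T,\dots$ yields
\begin{equation}
\P_x\big(\tau_\sigma^M>kT\big)\leq \big(1-e^{-\gamma\sigma^{-1}}\big)^k .
\end{equation}
Summing over $k$ gives $E_\sigma^M(x)\leq T e^{\gamma\sigma^{-1}}$ uniformly in $x\in M$, which is the claim with $C=T$.

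The first step constructs, for each $x\in\overline M$, a control path leaving $M$ with small Freidlin--Wentzell action
\begin{equation}
S(\psi)=\tfrac14\int_0^{T_x}\big|G(\psi)^{-1}(\dot\psi-v(\psi))\big|^2\,\dd t ,
\end{equation}
which is finite and comparable to the unweighted action by Assumption \ref{ass:uniformly elliptic}.

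\begin{itemize}
\item By Assumption \ref{ass:no_stable}, points with finite deterministic escape time are dense in $M$. So for any $\eta>0$ there is $y$ with $|x-y|<\eta$ and $E_0^M(y)<\infty$.
\item The path first moves linearly from $x$ to $y$ in unit time. Its action is $O(\eta^2+\eta)$ up to the bounded drift contribution; shrinking the time appropriately makes it $O(\eta)$.
\item Next, the path follows $\phi^t y$ with zero action until it reaches $\theta(y)\in\partial M$.
\item Since $\partial M$ is the boundary of $\overline M$, there are points outside $\overline M$ arbitrarily close to $\theta(y)$. A final short segment of length $\eta$ pushes the path to a point $z$ with $\operatorname{dist}(z,\overline M)\geq\rho_x>0$, again at cost $O(\eta)$.
\end{itemize}
Choosing $\eta$ small gives $S(\psi)<\gamma/4$.

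The second step turns this path into a probability lower bound that is uniform over starting points near $x$. Write $u=G(\psi)^{-1}(\dot\psi-v(\psi))/\sqrt{2\sigma}$ and use Girsanov to change measure so that the process has drift $v+\sqrt{2\sigma}\,Gu$. Under the new measure, the argument of Lemma \ref{lem:exp_close_prob} (Grönwall plus the exponential martingale inequality) shows the following: for all starting points $y'$ with $|y'-x|<r_x$, the process stays within $\rho_x/2$ of $\psi$ (shifted by the initial offset) on $[0,T_x]$ with probability at least $1/2$. On that event it has exited $M$ by time $T_x$.

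The Radon--Nikodym density is $\exp\big(-\int u\,\dd \tilde W-\tfrac12\int|u|^2\big)$. On the tube event, intersected with a high-probability event where the stochastic integral is at most $\sigma^{-1}\gamma/4$ (bounded via the martingale inequality, since $\int|u|^2=O(S/\sigma)$), this density is at least $e^{-(S+\gamma/4)\sigma^{-1}}\geq e^{-\gamma\sigma^{-1}/2}$. Hence
\begin{equation}
\P_{y'}\big(\tau_\sigma^M\leq T_x\big)\geq \tfrac14 e^{-\gamma\sigma^{-1}/2}\geq e^{-\gamma\sigma^{-1}}
\end{equation}
for $\sigma$ small. A minor point is that the controlled path depends on $x$ and not on $y'$. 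Because $G$ is Lipschitz and bounded, the same control $u(t)$ evaluated along the actual process still gives tube closeness, via Grönwall as in \eqref{eq:inhom_Gronwall}.

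The third step uses compactness of $\overline M$. Cover $\overline M$ by finitely many balls $B_{r_{x_j}}(x_j)$ and set $T=\max_j T_{x_j}$ and $\sigma^*=\min_j\sigma^*_j$. This gives the uniform bound, and the geometric argument of the first paragraph concludes. I expect the main obstacle to be the second step, namely the uniform-in-initial-condition large-deviation lower bound with multiplicative noise. I would first try the direct Girsanov-plus-Lemma-\ref{lem:exp_close_prob} route sketched above. If that becomes messy, I would instead cite the uniform lower bound of \cite[Chapter 5, Theorem 3.1]{freidlin1998random} for diffusions with nondegenerate $G$.
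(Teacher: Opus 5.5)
Your proposal is correct and has the same three-part structure as the paper's proof:
\begin{enumerate}[(i)]
    \item a cheap escape path: move to a nearby point with finite deterministic escape time, which exists by Assumption~\ref{ass:no_stable}; follow the flow to $\partial M$; then push out of $\overline M$ using $\partial M=\partial\overline M$;
    \item a large-deviation lower bound $\P_x(\tau_\sigma^M\le T)\ge e^{-\gamma\sigma^{-1}}$ that is uniform in $x$;
    \item geometric trials via the Markov property, giving $E_\sigma^M(x)\le T e^{\gamma\sigma^{-1}}$.
\end{enumerate}
Where you differ is in how the uniformity in the starting point is obtained, and whether the lower bound is cited or proved.

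The paper makes the path family itself uniform. A finite $\alpha$-net of points with finite escape time gives one horizon $T_\alpha=K_\alpha+2\alpha$, and compactness of $\partial M$ gives one exit margin $\epsilon_\alpha$. It then cites the Freidlin--Wentzell bound \eqref{eq:action_func_upper_bound} with a single $\sigma^*$ for all $x$ and all $\psi_x$. This tacitly relies on the version of that theorem that is uniform over initial points in a compact set and over paths of bounded action.

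You instead prove the lower bound directly, by Girsanov with a deterministic control $u(t)$. Novikov's condition is then trivial, and $\int u\,\dd\tilde W$ is Gaussian with variance $\int|u|^2=2S/\sigma$. You reuse one control for a whole ball of initial conditions and get uniformity from a finite subcover of $\overline M$. Your route is self-contained and makes explicit the uniformity that the paper leaves to the citation; the paper's route is shorter.

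One point to state precisely when writing it out. The tube comparison should be with $\psi$ itself, not with $\psi$ ``shifted by the initial offset''. Grönwall gives
\[
\sup_{t\le T_x}|X^t_\sigma-\psi(t)|\le\Big(|y'-x|+\sqrt{2\sigma}\sup_{t\le T_x}|Z^t|\Big)\exp\Big(\int_0^{T_x}\big(L_v+L_G|w(t)|\big)\,\dd t\Big),
\]
where $w=G(\psi)^{-1}(\dot\psi-v(\psi))$, and $L_v$, $L_G$ are Lipschitz constants of $v$ and $G$. The exponential factor is finite because $w\in L^2$, and it dictates how small $r_x$ must be chosen relative to $\rho_x$.
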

\begin{proof}
    The proof of this lemma utilizes the so-called \emph{action-functional} of the family $\{X_\sigma^t\}_{\sigma > 0}$ of random processes. The theory of action functionals is described in detail in \cite[Chapter 3]{freidlin1998random}. To this end, given a weakly differentiable path $\psi:[0,T] \to \R^d$, define the functional
    \begin{equation}
        S_{0,T}(\psi) \coloneqq \frac{1}{4} \int_0^T \left|G(\psi(t))^{-1}\big(v(\psi(t)) - \partial_t \psi(t) \big)\right|^2 \, \dd t.
    \end{equation}
    This functional measures how much ``work'' is needed to make a trajectory follow the path $\psi$, if without forcing it would simply follow the deterministic flow~$\phi^t$. By Assumption \ref{ass:uniformly elliptic}, the matrix $G$ is uniformly elliptic such that there is a constant $\Lambda$ with
    \begin{equation}\label{eq:AF_upper_bound}
         S_{0,T}(\psi) \leq  \frac{1}{4}\Lambda \int_0^T \left|v(\psi(t)) - \partial_t \psi(t) \right|^2 \, \dd t.
    \end{equation}
    
    The functional $S_{0,T}$ is the \emph{action-functional} of the family $\{X_\sigma^t\}_{\sigma > 0}$ of stochastic processes (cf.~\cite[Chapter 4, Theorem 5.1]{freidlin1998random} and \cite[Chapter 3, Example 3.2]{freidlin1998random}). This means that for each $\epsilon>0$ and $\gamma>0$, there is a $\sigma^*>0$ such that for all $\sigma\in (0, \sigma^*)$ we have the bound
    \begin{equation}\label{eq:action_func_upper_bound}
        \P_x\left(\sup_{0\leq t \leq T} |X_\sigma^t - \psi(t) | < \epsilon \right) \geq e^{- (S_{0,T}(\psi) + \frac{1}{2}\gamma)\sigma^{-1}},
    \end{equation}
    where $\psi:[0,T] \to \R^d$ is a weakly differentiable path with $\psi(0) = X_\sigma^0 = x$.\\

    Let $\gamma>0$ be given. Our goal is to find $\epsilon>0$ and for each $x\in M$ a weakly differentiable path $\psi_x:[0,T] \to \R^d$ with the following three properties:
    \begin{enumerate}[(i)]
        \item $\psi_x(0) = x$;
        \item $\sup_{0\leq t\leq T} \text{dist}(\psi_x(t), M) > \epsilon$;
        \item $S_{0,T}(\psi_x) \leq \frac{1}{2} \gamma$.
    \end{enumerate}
    The first two properties state that $\psi_x$ is a path that starts in $x$ and leaves the set $M$. The third property requires that this transition is done at a low cost, measured in terms of the functional~$S_{0,T}(\psi_x)$. We construct these paths $\psi_x$ in four steps. First, we move from $x$ to a nearby point $y\in M$ that exits $M$ under the deterministic flow $\phi^t$. Then we follow the deterministic flow until we reach the boundary~$\partial M$. Then, we move away from $M$ such that (ii) is satisfied. Lastly, we let $\psi_x$ follow the deterministic flow until time $T$, as this does not increase its functional~$S_{0,T}(\psi_x)$.
    
    Let $\alpha >0$ be a small parameter. Due to Assumption \ref{ass:no_stable}, the set of points $x\in M$ with $E_0^M(x)<\infty$ is dense. Since $\overline{M}$ is compact, there is a finite set of points $x_i\in M$ with $E_0^M(x_i)<\infty$ such that the union of the open balls of size $\alpha$ around each of the $x_i$'s covers $\overline{M}$. Set
    \begin{equation}
        K_\alpha \coloneqq \max_i E_0^M(x_i).
    \end{equation}
    
    In addition, for $\theta\in \partial M$ consider the function
    \begin{equation}
        \delta_\alpha(\theta) \coloneqq \sup_{z \in B_\alpha(\theta)} \text{dist}(z, M).
    \end{equation}
    We assumed that the boundary of $M$ equals the boundary of the closure $\overline{M}$. This implies that $\delta_\alpha(\theta) >0$ for all $\theta \in \partial M$. Naturally, the function $\delta_\alpha$ is continuous in $\theta \in \partial M$. Since the boundary $\partial M$ is a compact set, the function $\delta_\alpha$ is bounded from below. We choose $\epsilon_\alpha>0$ small enough such that $\delta_\alpha(\theta) > \epsilon_\alpha$ for all $\theta \in \partial M$.

    The definitions of $K_\alpha$ and $\epsilon_\alpha$ have the following purpose: For each $x\in M$, there is a point $y \in M$ within a distance of $\alpha$ such that $E_0^M(y) \leq K_\alpha$. For each $\theta \in \partial M$, there is a point $z\notin M$ within a distance of $\alpha$ such that $\text{dist}(z, M) >\epsilon_\alpha$. We make use of these properties to construct the paths $\psi_x$.
    
    Set $T_\alpha \coloneqq K_\alpha + 2 \alpha$. Given $x\in M$, construct the path $\psi_x:[0,T_\alpha] \to \R^d$ as the concatenation of four segments. We start in $x$ and go in a straight line to a point $y\in M$ within a distance of $\alpha$ which satisfies $E_0^M(y) \leq K_\alpha$. Then, we follow the deterministic flow until we reach the boundary of $M$ in the point $\theta(y) = \phi^{E_0^M(y)}y \in \partial M$. From $\theta(y)$ go in a straight line to a point $z$ within a distance of $\alpha$ that satisfies $\text{dist}(z,M) > \epsilon_\alpha$. Lastly, follow the deterministic flow until the time $T_\alpha$ is up. These segments are defined as follows:
    \begin{align*}
        \psi_x^1(t) &= x + \frac{t}{\alpha} (y-x), && t\in [0, \alpha],\\
        \psi_x^2(t) &= \phi^t y, && t\in [0, E_0^M(y)],\\
        \psi_x^3(t) &= \theta(y) + \frac{t}{\alpha}(z - \theta(y)), && t\in [0, \alpha], \\
        \psi_x^4(t) &= \phi^t z, && t\in [0, T_\alpha - E_0^M(y) - 2 \alpha].
    \end{align*}
    One can verify that these paths concatenate in the sense that the endpoint of one path is the starting point of the next. Since each path is differentiable, their concatenation is weakly differentiable. Lastly, the total time of the four paths together is $T_\alpha$ such that $\psi_x \coloneqq \psi_x^4 \circ \psi_x^3 \circ\psi_x^2 \circ\psi_x^1:[0,T_\alpha] \to \R^d$ is a weakly differentiable path. The path $\psi_x$ starts in $x$ such that it satisfies property (i). Since $\psi_x$ runs through the point $z \notin M$ which, by construction, has a distance to $M$ of at least $\epsilon_\alpha$, the path satisfies property (ii). Lastly, the functional $S_{0,T}(\psi_x)$ is the sum of the functionals of the four individual segments. Since segments two and four follow the deterministic flow, their functional is zero. By \eqref{eq:AF_upper_bound}, the functionals of the first and third segments are upper bounded by
    \begin{align}
        S_{0,T}(\psi_x^i) \leq \frac{1}{4}\Lambda \int_0^\alpha |v(\psi_x^i(t)) - \partial_t \psi_x^i(t)|^2 \, \dd t.
    \end{align}
    Let $L>0$ be the uniform bound on the strength of the velocity field $v$. By construction, the absolute values of the derivatives of $\psi_x^1$ and $\psi_x^3$ are bounded by $1$. We conclude
    \begin{equation}
        S_{0,T_\alpha}(\psi_x^i) \leq \frac{1}{4} \Lambda\alpha (L+1)^2.
    \end{equation}
    This shows that $S_{0,T_\alpha}(\psi_x) \leq \alpha \Lambda(L+1)^2$. In particular, we can choose $\alpha >0$ small enough such that $S_{0,T_\alpha}(\psi_x) \leq \frac{1}{2} \gamma$. By \eqref{eq:action_func_upper_bound}, we find for $\sigma\in (0, \sigma^*)$
    \begin{equation}
        \P_x\left(\sup_{0\leq t \leq T_\alpha} |X_\sigma^t - \psi_x(t) | < \epsilon_\alpha \right) \geq e^{-\gamma\sigma^{-1} }.
    \end{equation}
    Due to property (ii), the path $\psi_x$ is farther than $\epsilon_\alpha$ from $M$ at some point $t\in [0,T]$. Hence, we conclude
    \begin{equation}
        \P_x\left(\tau_\sigma^M \leq T_\alpha \right) \geq e^{-\gamma\sigma^{-1} }.
    \end{equation}
    This bound holds uniformly over all initial conditions $X_\sigma^0 =x\in M$. Using the Markov property and this uniform bound, we find
    \begin{nalign}[eq:induction]
        \P_x(\tau_\sigma^M > nT_\alpha) &= \E_x\left[\1_{\{\tau_\sigma^M > n T_\alpha\}}\right] \\
        &= \E_x\left[\1_{\{\tau_\sigma^M > n T_\alpha\}} \mid \tau_\sigma^M > (n-1) T_\alpha\right] \P_x \big(\tau_\sigma^M > (n-1) T_\alpha\big) \\
        &= \E_x\left[\P_{X_\sigma^{(n-1)T_\alpha}}(\tau_\sigma^M > T_\alpha)\mid \tau_\sigma^M > (n-1) T_\alpha\right] \P_x \big(\tau_\sigma^M > (n-1) T_\alpha\big)\\
        &\leq (1- e^{-\gamma\sigma^{-1}})\, \P_x \big(\tau_\sigma^M > (n-1) T_\alpha\big).
    \end{nalign}
    By induction, we obtain 
    \begin{equation}
        \P_x(\tau_\sigma^M > nT_\alpha) \leq \big(1-e^{-\gamma\sigma^{-1} }\big)^n .
    \end{equation}
    
    Finally, we conclude that for all $x\in M$ and $\sigma \in (0,\sigma^*)$,
    \begin{align}
        E_\sigma^M (x) &= \int_0^\infty \P_x\big(\tau_\sigma^M > t\big) \, \dd t \\ &\leq \sum_{n=0}^\infty \P_x\big(\tau_\sigma^M > nT_\alpha\big) T_\alpha \\&\leq T_\alpha \sum_{n=0}^\infty\big(1- e^{-\gamma\sigma^{-1} }\big)^n \\&= T_\alpha e^{\gamma\sigma^{-1} }.
    \end{align}
    The constants $T_\alpha$, $\gamma$, and $\sigma^*$ are chosen independently of $x\in M$. This completes the proof.
\end{proof}

\subsection{Proof of Theorem \ref{thm:stoch_inertia}}\label{sec:proof_A}
Let $x^* \in M$ with $E_0^M(x^*)< \infty$ such that $x^*$ leaves $M$ non-tangentially and such that $\partial M$ is twice differentiable in a neighbourhood of $\theta(x^*)$. We first restrict our attention to a subset $U\subset M$ on which $E_0^M$ is finite. \\

\textbf{Construction of the subset $U\subset M$.}\\
Let $I\subset \partial M$ be an open (as a subset of $\partial M$) neighbourhood of $\theta(x^*)$ such that $I$ is twice differentiable. Let $T> E_0^M(x^*)$ and define the set
\begin{equation}\label{eq:def_U}
    U:= \{\phi^{-t} x \mid x \in I, \, t\in(0, T) \}.
\end{equation}
Since $v$ is $C^2$, for $I$ small enough, there are $\delta>0$ and $\epsilon>0$ such that $\text{dist}(\phi^\delta I, M) > \epsilon$. Additionally, $T>E_0^M(x^*)$ and $I\subset \partial M$ can be chosen small enough such that 
$U\subset M$ and $\partial U \cap \partial M = \overline{I}$. By construction, $U$ is an open neighborhood of the trajectory $\{\phi^t x^* \}_{t\in [0, E_0^M(x^*))}$. For $X_\sigma^0=x\in U$, let $\tau_\sigma^U$ be the random escape time from $U$, and let $E_\sigma^U(x)$ be its expectation.

The set $U \subset M$ has the following properties:
\begin{enumerate}[(i)]
    \item For all $x\in U$, we have $E_0^U(x) = E_0^M(x) < T$;
    \item For all $x \in U$, we have $\phi^tx \in U$ for $t\in [0, E_0^M(x))$ and $\text{dist}(\phi^{E_0^M(x) + \delta}x, U) > \epsilon$.
\end{enumerate}

\textbf{Computation of the differential quotients.}\\
Our goal is to compute the derivative of $\sigma \mapsto E_\sigma^M(x^*)$ in $\sigma= 0$. We first compute the derivative of $\sigma \mapsto E_\sigma^U(x^*)$ and then relate the two quantities. The differential quotients are related by
\begin{equation}\label{eq:start}
    \sigma^{-1} \big(E_\sigma^M(x) - E_0^M(x) \big) = \sigma^{-1}\big(E_\sigma^U(x) - E_0^U(x)\big) + \sigma^{-1}\big(E_\sigma^M(x) - E_\sigma^U(x)\big).
\end{equation}
Since $v$ is assumed to be $C^2$ and $I$ is twice differentiable, the implicit function theorem implies that $E_0^M$ is twice differentiable on the closure $\overline U$. By Lemma \ref{lem:uniformly_integrable}, the exit time $\tau_\sigma^U$ has finite mean for small $\sigma>0$. Hence, we can apply Dynkin's formula to the function $E_0^M$ and the stopping time $\tau_\sigma^U$,
\begin{align}
    \E_x[E_0^M(X^{\tau_\sigma^U}_\sigma)] &= E_0^M(x) + \E_x\left[ \int_0^{\tau_\sigma^U} \sigma \, \Gamma(X_\sigma^t) + v(X_\sigma^t)\cdot \nabla E_0^M(X^t_\sigma)\, \dd t \right] \\
    &= E_0^U(x) + \sigma \, \E_x\left[ \int_0^{\tau_\sigma^U} \Gamma(X_\sigma^t) \, \dd t \right] - \E_x[\tau_\sigma^U],
\end{align}
where we used the fact that $v\cdot \nabla E_0^M = -1$. Note that it is sufficient for $E_0^M$ to be defined and twice differentiable on the closure $\overline U$ because due to the stopping time $\tau^U_\sigma$, we never evaluate $E_0^M$ outside of $\overline U$. By definition $\E_x[\tau_\sigma^U] = E_\sigma^U(x)$. After rearranging the equation, we obtain
\begin{equation}\label{eq:EU_diff}
    \sigma^{-1} (E_\sigma^U(x) - E_0^U(x)) = \E_x\left[ \int_0^{\tau_\sigma^U} \Gamma(X_\sigma^t)\, \dd t \right] - \sigma^{-1} \E_x[E_0^M(X_\sigma^{\tau_\sigma^U})].
\end{equation}
Combining \eqref{eq:start} and \eqref{eq:EU_diff} yields
\begin{equation}\label{eq:differential_quotioent_full}
    \sigma^{-1} (E_\sigma^M(x) - E_0^M(x) ) = \E_x\left[ \int_0^{\tau_\sigma^U} \Gamma(X_\sigma^t) \, \dd t \right] - \sigma^{-1} \E_x[E_0^M(X_\sigma^{\tau_\sigma^U})]+ \sigma^{-1}(E_\sigma^M(x) - E_\sigma^U(x)).
\end{equation}
Since $U \subset M$, we naturally have $E_\sigma^M(x) \geq E_\sigma^U(x)$ for all $\sigma \geq 0$, such that
\begin{equation}\label{eq:start_leq}
    \sigma^{-1} (E_\sigma^M(x) - E_0^M(x) ) \geq \E_x\left[ \int_0^{\tau_\sigma^U} \Gamma(X_\sigma^t) \, \dd t \right] - \sigma^{-1} \E_x[E_0^M(X_\sigma^{\tau_\sigma^U})].
\end{equation}

If $\liminf_{\sigma \to 0}\sigma^{-1} (E_\sigma^M(x) - E_0^M(x) ) >0$, then there is stochastic inertia in $x$. Theorem \ref{thm:stoch_inertia} is proven once we show the following limits for $X_\sigma^0 = x \in U$ (in particular for $X_\sigma^0 = x^*$).
\begin{align}
     \text{(L1)}& & &\lim_{\sigma \to 0}  \E_x\left[ \int_0^{\tau_\sigma^U} \Gamma(X_\sigma^t)\, \dd t \right] = \int_0^{E_0^M(x)} \Gamma(\phi^t x)\, \dd t, \\
    \text{(L2)}& & &\lim_{\sigma \to 0} \sigma^{-1} \E_x[E_0^M(X^{\tau_\sigma^U}_\sigma)] = 0.
\end{align}
The rest of this section contains the proof of these two limits.

\begin{lemma}\label{lem:uniformly_integrable}
    There is a $\sigma^*>0$ such that the family of stopping times $\{\tau_\sigma^U\}_{\sigma \in (0, \sigma^*)}$ is uniformly integrable, that is
    \begin{equation}
        \lim_{K \to \infty} \sup_{\sigma \in (0, \sigma^*)} \E[|\tau_\sigma^U| \, \1_{\tau_\sigma^U > K}] = 0.
    \end{equation}
    In particular, $\E[\tau_\sigma^U]<\infty$ for all $\sigma \in (0, \sigma^*)$.
\end{lemma}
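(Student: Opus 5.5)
The plan is to obtain a geometric tail bound for $\tau_\sigma^U$ that is uniform in the starting point $x\in U$ and in small $\sigma$, and then to deduce uniform integrability from that tail. The key input is property (ii) of $U$: every $x\in U$ satisfies $E_0^M(x)<T$ and $\operatorname{dist}(\phi^{E_0^M(x)+\delta}x,U)>\epsilon$.

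\textbf{Step 1: uniform one-period exit probability.} Set $T' := T+\delta$. For $x\in U$, the deterministic path $\phi^t x$ reaches a point at distance more than $\epsilon$ from $U$ at the time $E_0^M(x)+\delta\le T'$. Hence, on the event $\sup_{0\le t\le T'}|X_\sigma^t-\phi^t x|<\epsilon$, the process leaves $U$ before time $T'$. Lemma~\ref{lem:exp_close_prob}, applied with horizon $T'$ and threshold $\epsilon$, supplies constants $C,\gamma$ that are uniform in $x$ (this uniformity is exactly what that lemma asserts). It gives
\begin{equation}
\sup_{x\in U}\P_x(\tau_\sigma^U>T')\le Ce^{-\gamma\sigma^{-1}}.
\end{equation}
Choose $\sigma^*$ so small that $q:=Ce^{-\gamma/\sigma^*}\le 1/2$. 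Then $\sup_{x\in U}\P_x(\tau_\sigma^U>T')\le q$ for every $\sigma\in(0,\sigma^*)$.

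\textbf{Step 2: iteration via the Markov property.} Repeat the computation in \eqref{eq:induction}. Conditioning on $\tau_\sigma^U>(n-1)T'$ forces $X_\sigma^{(n-1)T'}\in U$, so the strong Markov property applied at time $(n-1)T'$ together with the uniform bound from Step 1 gives
\begin{equation}
\P_x(\tau_\sigma^U>nT')\le q^n\le 2^{-n}
\end{equation}
for all $x\in U$, $\sigma\in(0,\sigma^*)$ and $n\ge 0$.

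\textbf{Step 3: uniform integrability.} For $K\ge T'$, let $m=\lfloor K/T'\rfloor$. Then
\begin{equation}
\E[\tau_\sigma^U\1_{\tau_\sigma^U>K}]\le \sum_{n\ge m}(n+1)T'\,\P(\tau_\sigma^U>nT')\le T'\sum_{n\ge m}(n+1)2^{-n}.
\end{equation}
The right-hand side is independent of $\sigma$ and tends to $0$ as $K\to\infty$, which is the uniform integrability claim. Finiteness of $\E[\tau_\sigma^U]$ follows from the same sum with $m=0$.

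The only delicate point is Step 1. It needs the escape margin $\epsilon$ and the time $T$ to be uniform over all of $U$, not just valid at $x^*$, and this is precisely what the construction of $U$ in \eqref{eq:def_U} was designed to provide. No ellipticity or Assumption~\ref{ass:no_stable} is needed.
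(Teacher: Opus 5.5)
Your proposal is correct and follows essentially the same route as the paper. Both arguments derive a uniform-in-$x$ one-period exit bound on $[0,T+\delta]$ from property (ii) of $U$ and Lemma~\ref{lem:exp_close_prob}, iterate it with the Markov property to get the geometric tail $\P_x(\tau_\sigma^U>nT')\le q^n$, and then sum the tail to control $\E[\tau_\sigma^U\1_{\tau_\sigma^U>K}]$. The differences are cosmetic: you fix $q\le 1/2$ rather than just $\beta<1$, and you bound by $\sum_{n\ge m}(n+1)T'\,\P(\tau_\sigma^U>nT')$ instead of using the layer-cake identity.
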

\begin{proof}
    Recall the constants $T$, $\delta$ and $\epsilon$ that we used in the definition of the set $U$. Define $T_\delta \coloneqq T + \delta$.
    By the property (ii) of the set $U$, we find
    \begin{equation} 
        \P_x\big( \tau_\sigma^U \leq T_\delta\big) \geq \P_x\left( \sup_{0\leq t \leq T_\delta} |X_\sigma^t - \phi^t x| < \epsilon \right).
    \end{equation}
    The probability on the right-hand side is lower bounded by \eqref{eq:P_exp_convergence} in Lemma \ref{lem:exp_close_prob}, which implies that there are $\gamma>0$ and $C>0$ such that
    \begin{equation}\label{eq:tau_U_upper_bound}
        \P_x\big( \tau_\sigma^U \leq T_\delta\big) \geq 1 - Ce^{-\gamma \sigma^{-1}}.
    \end{equation}
    This bound holds uniformly in $x\in U$. Using the Markov property, analogously to \eqref{eq:induction}, we bound the distribution of $\tau_\sigma^U$ as
    \begin{equation}\label{eq:tau_U_Markov}
        \P_x\big( \tau_\sigma^U > nT_\delta\big) \leq \big( Ce^{-\gamma\sigma^{-1}}\big)^n.
    \end{equation} 
    Let $\sigma^*>0$ be small enough such that $\beta \coloneqq Ce^{-\gamma(\sigma^*)^{-1}}<1$. W.l.o.g.~let $L=mT_\delta$. We compute 
    \begin{align}
        \E_x[\tau_\sigma^U \, \1_{\tau_\sigma^U>L}] &= \P_x\big( \tau_\sigma^U > L\big) \, L + \int_L^\infty \P_x\big(\tau_\sigma^U > t \big) \, \dd t\\
        &\leq \beta^m L +\sum_{n=0}^\infty \P_x\big( \tau_\sigma^U >(m+n)T_\delta\big)\, T_\delta \\
        &\leq \beta^m L + \sum_{n=0}^\infty \beta^{m+n}\, T_\delta\\
        &= \beta^m\left(L+\frac{1}{1-\beta}T_\delta\right).
    \end{align}
    Taking the limit $L\to \infty$ corresponds to taking the limit $m\to \infty$. Then, the right-hand side goes to $0$, which proves that the family $\{\tau_\sigma^U\}_{\sigma\in(0,\sigma^*)}$ is uniformly integrable.
\end{proof}
Using this lemma, we prove the first of the two limits.

\begin{proposition}[L1]
    $\lim_{\sigma \to 0}  \E_x\left[ \int_0^{\tau_\sigma^U} \Gamma(X_\sigma^t)\, \dd t \right] = \int_0^{E_0^M(x)} \Gamma(\phi^t x)\, \dd t.$
\end{proposition}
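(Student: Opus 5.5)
We need to prove (L1). $\Gamma$ is continuous on $\overline U$, since $E_0^M$ is $C^2$ there by the implicit function theorem and $G$ is continuous. So $|\Gamma|\le K_\Gamma$ on $\overline U$, and $\Gamma$ is uniformly continuous there. Because the integral only runs up to $\tau_\sigma^U$, the process stays in $\overline U$ and the integrand is always bounded by $K_\Gamma$. The plan is to compare the random integral with the deterministic one on a good event of overwhelming probability, and to control the bad event through uniform integrability.

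Fix $\eta>0$. Choose $\epsilon'>0$ so that $|\Gamma(y)-\Gamma(z)|<\eta$ whenever $y,z\in\overline U$ with $|y-z|<\epsilon'$. Set $T_0=E_0^M(x)+1$ and define the good event
\[
A_\sigma=\Big\{\sup_{0\le t\le T_0}|X_\sigma^t-\phi^tx|<\epsilon'\Big\}\cap\big\{|\tau_\sigma^U-E_0^M(x)|<\eta\big\}.
\]
By Lemma \ref{lem:exp_close_prob}, and by Lemma \ref{lem:escape_time_convergence} applied to the domain $U$, we get $\P_x(A_\sigma^c)\le Ce^{-\gamma\sigma^{-1}}$. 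The second lemma is stated for $M$, but its proof only uses property (ii) of $U$ and $\operatorname{dist}(\phi^tx,\partial U)>0$ for $t<E_0^U(x)=E_0^M(x)$, so it transfers verbatim. We may take $\eta<1$.

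We split
\[
\E_x\Big[\int_0^{\tau_\sigma^U}\Gamma(X_\sigma^t)\,\dd t\Big]-\int_0^{E_0^M(x)}\Gamma(\phi^tx)\,\dd t
\]
into its contributions on $A_\sigma$ and on $A_\sigma^c$. We may need to extend $\Gamma$ continuously and boundedly slightly beyond $\overline U$, or simply note that the trajectory up to $E_0^M(x)$ lies in $\overline U$.

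On $A_\sigma$, write $m=\min(\tau_\sigma^U,E_0^M(x))$. The difference is bounded by
\[
\int_0^{m}|\Gamma(X_\sigma^t)-\Gamma(\phi^tx)|\,\dd t+K_\Gamma|\tau_\sigma^U-E_0^M(x)|\le \eta E_0^M(x)+K_\Gamma\eta .
\]
On $A_\sigma^c$, the difference is bounded by
\[
K_\Gamma\,\E_x\big[(\tau_\sigma^U+E_0^M(x))\1_{A_\sigma^c}\big].
\]
Handling this bad-event term is the only delicate step, because $\tau_\sigma^U$ is unbounded. I would use Lemma \ref{lem:uniformly_integrable}. For any $L$,
\[
\E_x[\tau_\sigma^U\1_{A_\sigma^c}]\le L\,\P_x(A_\sigma^c)+\E_x[\tau_\sigma^U\1_{\tau_\sigma^U>L}].
\]
The first term vanishes as $\sigma\to0$. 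The second is small uniformly in $\sigma\in(0,\sigma^*)$ once $L$ is large. Alternatively, Cauchy--Schwarz together with the geometric tail bound \eqref{eq:tau_U_Markov} gives a uniformly bounded second moment.

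Combining the two contributions, the $\limsup$ of the absolute difference is at most $\eta(E_0^M(x)+K_\Gamma)$. Since $\eta$ was arbitrary, (L1) follows.
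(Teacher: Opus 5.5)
Your proof is correct and takes essentially the same route as the paper. On the good event from Lemmas~\ref{lem:exp_close_prob} and~\ref{lem:escape_time_convergence} you bound the difference by $(K+E_0^M(x))\eta$, and the bad event is controlled by the uniform integrability of Lemma~\ref{lem:uniformly_integrable}. The only differences are that you carry out the Vitali argument by hand, by splitting $\E_x[\tau_\sigma^U\1_{A_\sigma^c}]$ at a level $L$, rather than citing the theorem, and that integrating only up to $\min(\tau_\sigma^U,E_0^M(x))$ keeps $\Gamma$ evaluated inside $\overline U$, a point the paper glosses over.
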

\begin{proof}
    We first show that the random integral converges in probability and then use the Vitali convergence theorem to conclude that the convergence also holds in expectation.

    The functional $\Gamma$ is continuous on $U$. Since $U$ is bounded, $\Gamma$ is bounded by some constant $K>0$ and uniformly continuous. Let $\epsilon >0$. By uniform continuity, there is a value $\delta>0$ such that
    \begin{equation}
        |x-y|<\delta \quad  \Rightarrow \quad |\Gamma(x) - \Gamma(y)| < \epsilon, \qquad \forall x,y\in U.
    \end{equation}
    On the event $\sup_{0\leq t\leq E_0^M(x)}|X_\sigma^t - \phi^t x|< \delta$, and $|\tau_\sigma^U - E_0^M(x)| < \epsilon$, we bound the difference
    \begin{align}
        \left|\int_0^{\tau_\sigma^U} \Gamma(X_\sigma^t)\, \dd t - \int_0^{E_0^M(x)} \Gamma(\phi^t x)\, \dd t\right| &\leq K |\tau_\sigma^U - E_0^M(x)| + \left|\int_0^{E_0^M(x)} \Gamma(X_\sigma^t) -\Gamma( \phi^t x) \, \dd t \right| \\
        &\leq  (K + E_0^M(x)) \epsilon.
    \end{align}

    By Lemma \ref{lem:exp_close_prob} the term $\sup_{0\leq t\leq E_0^M(x)}|X_\sigma^t - \phi^t x|$ converges to zero in probability. By Lemma \ref{lem:escape_time_convergence}, the term $|\tau_\sigma^U - E_0^U(x)|$ converges to zero in probability. We conclude that for any $\epsilon>0$
    \begin{equation}
        \lim_{\sigma\to 0} \P_x \left( \left|\int_0^{\tau_\sigma^U} \Gamma(X_\sigma^t)\, \dd t - \int_0^{E_0^M(x)} \Gamma(\phi^t x)\, \dd t\right| \geq (K+E_0^M(x))\epsilon\right) = 0.
    \end{equation}
    Since $\epsilon$ can be chosen arbitrarily small, this shows that the random integral converges in probability. 
    
    To go from convergence in probability to convergence in expectation, we make use of the Vitali convergence theorem \cite[Theorem 4.6.3]{durrett2019probability}. It states that a uniformly integrable family of random variables that converges in probability also converges in expectation. Hence, it suffices to prove that the random integral is uniformly integrable. By construction of $K$, we have
    \begin{equation}
        \left|\int_0^{\tau_\sigma^U} \Gamma(X_\sigma^t)\, \dd t \right| \leq K |\tau_\sigma^U|.
    \end{equation}
    Lemma \ref{lem:uniformly_integrable} proved that the family $\{\tau_\sigma^U\}_{\sigma \in (0, \sigma^*)}$ is uniformly integrable. As a consequence, also the family of the random integrals is uniformly integrable. This shows that the Vitali convergence theorem is applicable and we conclude
    \begin{equation}
        \lim_{\sigma \to 0}  \E_x\left[ \int_0^{\tau_\sigma^U} \Gamma(X_\sigma^t)\, \dd t \right] = \int_0^{E_0^M(x)} \Gamma(\phi^t x)\, \dd t.
    \end{equation}
\end{proof}

\begin{proposition}[L2]
    $\lim_{\sigma \to 0} \sigma^{-1} \E_x\left[E_0^M(X_\sigma^{\tau_\sigma^U})\right] = 0.$
\end{proposition}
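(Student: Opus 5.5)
The plan is to exploit the structure of $\partial U$. By construction \eqref{eq:def_U}, $\partial U$ splits into two pieces. The first is the exit piece $\overline I\subset\partial M$, on which $E_0^M\equiv 0$. The second is the remainder $R:=\partial U\setminus I$. It consists of the ``back'' face $\phi^{-T}\overline I$ and the ``lateral'' face $\{\phi^{-t}y \mid y\in\partial I,\ t\in[0,T]\}$ (boundary of $I$ taken relative to $\partial M$). By property (i) of $U$ and continuity, $E_0^M$ is bounded by $T$ on $\overline U$. Hence
\begin{equation}
    0\le \E_x\big[E_0^M(X_\sigma^{\tau_\sigma^U})\big] \le T\,\P_x\big(X_\sigma^{\tau_\sigma^U}\in R\big),
\end{equation}
and it suffices to show that the probability of exiting $U$ through $R$ is exponentially small in $\sigma^{-1}$. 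Multiplying by $\sigma^{-1}$ then still gives a quantity tending to $0$.

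To bound this probability, fix $x\in U$ (in particular $x=x^*$) and consider the compact deterministic segment $\Sigma:=\{\phi^tx \mid t\in[0,E_0^M(x)]\}$, which ends at $\theta(x)$. I claim $\operatorname{dist}(\Sigma,R)=:\eta>0$.

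\textbf{Exit point.} Since $x\in U$, we have $\theta(x)\in I$, and $I$ is open in $\partial M$. So $\theta(x)\notin \partial I$, and $\theta(x)$ lies off the lateral face.

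\textbf{Lateral face.} By uniqueness of the flow, $\Sigma$ meets the lateral face only if its forward orbit ends in $\partial I$. This would contradict the previous point.

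\textbf{Back face.} $\Sigma$ meets the back face only if $x=\phi^{-s}y$ with $y\in\overline I$ and $s\ge T$. This would force $E_0^M(x)\ge T$, contradicting property (i) of $U$.

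Since $R$ is closed and $\Sigma$ is compact, the claim follows. (If $\overline I$ versus $I$ causes trouble, I would shrink $I$ slightly around $\theta(x^*)$ in the construction of $U$.)

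Next choose $\epsilon'<\min(\eta,\epsilon)$, with $\epsilon,\delta$ from property (ii), and consider the event
\begin{equation}
    A:=\Big\{\sup_{0\le t\le E_0^M(x)+\delta}|X_\sigma^t-\phi^tx|<\epsilon'\Big\}.
\end{equation}
On $A$, the process leaves $U$ before time $E_0^M(x)+\delta$, by property (ii). Up to time $E_0^M(x)$ it stays within $\epsilon'$ of $\Sigma$, hence away from $R$. On $[E_0^M(x),E_0^M(x)+\delta]$ it stays within $\epsilon'$ of the short forward arc $\{\phi^t\theta(x)\}_{t\in[0,\delta]}$. This arc leaves $\overline M$ transversally through an interior point of $I$, by the non-tangential exit assumption. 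After possibly shrinking $\delta$ and $\epsilon'$, the $\epsilon'$-tube around it meets $\partial U$ only in $I$. Thus on $A$ the exit point $X_\sigma^{\tau_\sigma^U}$ lies in $I$. Lemma \ref{lem:exp_close_prob} then gives $\P_x(A^c)\le Ce^{-\gamma\sigma^{-1}}$, so
\begin{equation}
    \sigma^{-1}\E_x\big[E_0^M(X_\sigma^{\tau_\sigma^U})\big]\le T\,C\,\sigma^{-1}e^{-\gamma\sigma^{-1}}\xrightarrow[\sigma\to0]{}0 .
\end{equation}

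The main obstacle is the geometric step in the exit argument: showing rigorously that a path staying $\epsilon'$-close to the deterministic trajectory must leave $U$ through $I$, rather than slipping out through a lateral face near $\theta(x)$. I would handle it with a local flow-box picture around $\theta(x)$. Near $\theta(x)$, transversality lets one straighten $v$ and represent $U$ locally as $\{\phi^{-t}y \mid y\in I,\ t\in(0,T)\}$. In these coordinates the lateral faces sit at a positive distance from the straightened trajectory, and the estimate is uniform for small $\epsilon'$.
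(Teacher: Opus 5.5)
Your proposal is correct and follows the paper's proof. You bound $\E_x[E_0^M(X_\sigma^{\tau_\sigma^U})]$ by $T\,\P_x(X_\sigma^{\tau_\sigma^U}\notin I)$, and you dominate this probability by the probability that the path deviates from $\phi^t x$ by at least $\epsilon$, which Lemma~\ref{lem:exp_close_prob} makes exponentially small. The only difference is that you justify, via the positive distance between the deterministic arc and $\partial U\setminus I$, the geometric step that the paper simply asserts by taking $\epsilon$ small ``w.l.o.g.''. In your lateral-face step you should also rule out the forward orbit reaching $\partial I$ after it has left $M$; this follows from $U\subset M$ together with the transversal exit.
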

Note that $E_0^M$ is zero on the boundary portion $I\subset \partial M$. Thus, the limit (L2) has the interpretation that exiting $U$ happens with overwhelming probability through~$I$.
\begin{proof}
    By definition of the stopping time, we find $X_\sigma^{\tau_\sigma^U} \in \partial U$. If $X_\sigma^{\tau_\sigma^U} \in I \subset \partial M$, then $E_0^M(X_\sigma^{\tau_\sigma^U}) = 0$. Furthermore, we have $E_0^M(x) \leq T$ for all $x\in \overline{U}$, where $T$ is the constant in the definition of $U$, cf.~\eqref{eq:def_U}. We conclude
    \begin{equation}
        \E_x\left[E_0^M(X_\sigma^{\tau_\sigma^U})\right] \leq \P_x\left(X_\sigma^{\tau_\sigma^U} \notin I\right) T.
    \end{equation}
    Assume $X_\sigma^0 = x$. Recall the constants $\delta>0$ and $\epsilon>0$ from property (ii) of $U$, and let $T_\delta:= T+ \delta$. W.l.o.g.~we can choose $\epsilon$ small enough such that $\sup_{0\leq t \leq T_\delta} |X_\sigma^t - \phi^t x| < \epsilon$ implies $\tau_\sigma^U \leq T_\delta$ and $X_\sigma^{\tau_\sigma^U} \in I$. We conclude
    \begin{equation}
        \P_x(X_\sigma^{\tau_\sigma^U} \notin I) \leq \P_x\left( \sup_{0\leq t \leq T_\delta} |X_\sigma^t - \phi^t x| \geq \epsilon \right).
    \end{equation}
    By \eqref{eq:P_exp_convergence} in Lemma \ref{lem:exp_close_prob} there are $C>0$ and $\gamma>0$ such that
    \begin{equation}\label{eq:exit_I_prob}
        \P_x(X_\sigma^{\tau_\sigma^U} \notin I) \leq  \P_x\left( \sup_{0\leq t \leq T_\delta} |X_\sigma^t - \phi^t x| \geq \epsilon \right) < Ce^{-\gamma \sigma^{-1}}.
    \end{equation}
    We conclude
    \begin{equation}
        \lim_{\sigma \to 0} \sigma^{-1} \E_x[E_0^M(X_\sigma^{\tau_\sigma^U})] \leq \lim_{\sigma \to 0} \sigma^{-1} \P_x(X_\sigma^{\tau_\sigma^U} \notin I) \, T \leq \lim_{\sigma\to 0} \sigma^{-1} C e^{-\gamma \sigma^{-1}}T = 0.
    \end{equation}
    Since the limit cannot be negative, it must be $0$ and (L2) is proven.
\end{proof}

Together, (L1) and (L2) imply the statement of Theorem~\ref{thm:stoch_inertia}.

\subsection{Proof of Theorem \ref{thm:derivative}}
Recall \eqref{eq:differential_quotioent_full}, the representation of the differential quotient of $E_\sigma^M(x^*)$ in $\sigma = 0$:
\begin{equation}
    \sigma^{-1} (E_\sigma^M(x) - E_0^M(x) ) = \E_x\left[ \int_0^{\tau_\sigma^U} \Gamma(X_\sigma^t) \, \dd t \right] - \sigma^{-1} \E_x[E_0^M(X_\sigma^{\tau_\sigma^U})]+ \sigma^{-1}(E_\sigma^M(x) - E_\sigma^U(x)).
\end{equation}
The Propositions (L1) and (L2) show that the limits of the first two summands exist as $\sigma\to 0$. Inserting the respective limits yields
\begin{equation}
    \lim_{\sigma \to 0}  \sigma^{-1} (E_\sigma^M(x) - E_0^M(x) ) = \int_0^{E_0^M(x)} \Gamma(\phi^t x)\, \dd t + \lim_{\sigma \to 0} \sigma^{-1}(E_\sigma^M(x) - E_\sigma^U(x)).
\end{equation}

Now, Theorem \ref{thm:derivative} follows from the following proposition:

\begin{proposition}[L3]
    Assume that $v$ satisfies Assumption \ref{ass:no_stable} and that $G$ satisfies the uniform ellipticity Assumption \ref{ass:uniformly elliptic}. Then, $\lim_{\sigma \to 0} \sigma^{-1}(E_\sigma^M(x) - E_\sigma^U(x)) = 0.$
\end{proposition}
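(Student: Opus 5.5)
Since $U\subset M$, we have $\tau_\sigma^U\le\tau_\sigma^M$. After time $\tau_\sigma^U$ the process either has already left $M$, because it exited $U$ through $I\subset\partial M$, or it sits at a point of $M$ and has to leave $M$ afresh. By the strong Markov property at $\tau_\sigma^U$,
\begin{equation}
  E_\sigma^M(x)-E_\sigma^U(x)=\E_x\left[E_\sigma^M\big(X_\sigma^{\tau_\sigma^U}\big)\,\1_{\{X_\sigma^{\tau_\sigma^U}\notin I\}}\right]
  \le \P_x\big(X_\sigma^{\tau_\sigma^U}\notin I\big)\,\sup_{y\in M}E_\sigma^M(y).
\end{equation}
One subtlety is that a point of $\partial U\setminus \overline I$ lies in $M$ (or on $\partial M$, where $E_\sigma^M$ vanishes or is controlled by the same bound). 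Another is that on $\overline I\setminus I$ we must be careful, but this set is also covered by the uniform supremum bound. So the difference is nonnegative and bounded by a product of an exit-location probability and a uniform bound on the expected escape time from $M$.

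We bound each factor separately. For the first, we use \eqref{eq:exit_I_prob} from the proof of (L2): there are $C_1,\gamma_1>0$, which depend only on the construction of $U$ (the constants $\delta,\epsilon,T$), such that $\P_x(X_\sigma^{\tau_\sigma^U}\notin I)\le C_1e^{-\gamma_1\sigma^{-1}}$. For the second, we invoke Lemma \ref{lem:uniform_in_x}, which is where Assumptions \ref{ass:no_stable} and \ref{ass:uniformly elliptic} enter. We apply it with $\gamma:=\gamma_1/2$ and obtain $\sigma^*>0$ and $C_2>0$ with $\sup_{y\in M}E_\sigma^M(y)\le C_2e^{\gamma_1\sigma^{-1}/2}$ for $\sigma\in(0,\sigma^*)$. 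Combining the two bounds gives
\begin{equation}
  0\le \sigma^{-1}\big(E_\sigma^M(x)-E_\sigma^U(x)\big)\le C_1C_2\,\sigma^{-1}e^{-\gamma_1\sigma^{-1}/2}\xrightarrow[\sigma\to0]{}0 .
\end{equation}

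The step requiring the most care is the strong Markov decomposition. We need $\tau_\sigma^U<\infty$ almost surely, which holds by Lemma \ref{lem:uniformly_integrable}. We also need $E_\sigma^M$ to be finite and measurable as a function on $\overline M$, which follows from Lemma \ref{lem:uniform_in_x}. Finally, we must verify that exiting $U$ through $I$ means $\tau_\sigma^M=\tau_\sigma^U$ almost surely. The concern is that the process might touch $I\subset\partial M$ and come back into $M$. But $\tau_\sigma^M$ is defined as the first time the process is not in $M$, and points of $I$ are not in $M$, so hitting $I$ already stops the clock and this issue does not arise. The essential mechanism is the competition of exponents: the exit-location probability decays like $e^{-\gamma_1/\sigma}$ with a fixed rate $\gamma_1$, while Lemma \ref{lem:uniform_in_x} allows the escape-time growth rate $\gamma$ to be chosen arbitrarily small, in particular below $\gamma_1$.
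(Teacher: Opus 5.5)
Your proposal is correct and follows the paper's own proof of (L3) essentially step for step. Both arguments use the strong Markov decomposition at $\tau_\sigma^U$, then the exit-location bound \eqref{eq:exit_I_prob} from (L2), and then Lemma \ref{lem:uniform_in_x} with half the exponent, so that $\sigma^{-1}e^{-\gamma_1\sigma^{-1}/2}\to 0$. (Two small remarks, neither affecting the argument for fixed $x$: the constants in \eqref{eq:exit_I_prob} can depend on $x$ through the tube width, not only on $U$; and on $\overline I\setminus I\subset\partial M$ you do not need Lemma \ref{lem:uniform_in_x} at all, since reaching $\partial M$ already gives $\tau_\sigma^M=\tau_\sigma^U$.)
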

\begin{proof}
    The process $(X_\sigma^t)_{t\geq 0}$ has the strong Markov property. Hence, the expected escape time from $M$ is given by
    \begin{equation}
        E_\sigma^M(x) = E_\sigma^U(x) + \E_x \left[ \E_{X_\sigma^{\tau_\sigma^U}} \left[ \tau_\sigma^M \right] \right].
    \end{equation}
    If $X_\sigma^{\tau_\sigma^U} \in I \subset \partial M$, then the term $\E_{X_\sigma^{\tau_\sigma^U}} \left[ \tau_\sigma^M \right]$ is zero. Hence,
    \begin{equation}
        E_\sigma^M(x) - E_\sigma^U(x) \leq \P_x(X_\sigma^{\tau_\sigma^U}\notin I) \left( \sup_{y\in \partial U} E_\sigma^M(y)\right).
    \end{equation}
    In the previous proof (cf.~\eqref{eq:exit_I_prob}), we showed that $\P_x(X_\sigma^{\tau_\sigma^U}\notin I) < Ce^{-\gamma \sigma^{-1}}$ for some constants $C>0$ and $\gamma>0$. Set $\gamma^* = \frac{\gamma}{2}$. Invoking now Assumptions~\ref{ass:no_stable} and~\ref{ass:uniformly elliptic}, by Lemma \ref{lem:uniform_in_x} there are constants $C^*>0$ and $\sigma^*>0$  that do not depend on $x\in M$ such that for all $\sigma\in (0, \sigma^*)$ we find
    \begin{equation}
        E_\sigma^M(x) \leq C^* e^{\gamma^* \sigma^{-1}}.
    \end{equation}
    We conclude
    \begin{align}
        \lim_{\sigma \to 0} \sigma^{-1}(E_\sigma^M(x) - E_\sigma^U(x)) &\leq 
        \lim_{\sigma \to 0} \sigma^{-1} \left(\P_x(X_\sigma^{\tau_\sigma^U}\notin I) \left( \sup_{y\in \partial U} E_\sigma^M(y)\right)\right)\\
        &\leq \lim_{\sigma \to 0} \sigma^{-1} \left(C e^{-\gamma \sigma^{-1}} \, C^* e^{\gamma^* \sigma^{-1}} \right)\\&= 0.
    \end{align}
    Since the limit cannot be negative, it must be zero. This proves (L3).
\end{proof}

\section{Outlook}
\label{sec:outlook}

\subsection{Stochastic inertia as a regime-average quantity}
\label{ssec:regime_average}

As discussed in the Introduction, a motivation for this work originated from observations in theoretical meteorology where the change of regime lifetimes under addition of noise was studied \cite{kwasniok2014enhanced, dorrington2023interaction}. 
Such a regime is typically understood as a neighbourhood of some saddle equilibrium or, more generally, a subset $M \subset \mathbb R^d$ in phase space.
However, for a continuous-time model of the form~\eqref{eq:SDE}, considering regime persistence via the exit time distribution for $\sigma>0$ upon entry at some $x\in \partial M$ is not possible in a straightforward manner: due to properties of Brownian motion, almost every path of~\eqref{eq:SDE} would leave $M$ again instantaneously (see also \cite{schoeller2026noise} for a discussion of this issue).
In other words, the analysis of the impact of noise on exit times has to be supplemented by a suitable description of regime entry.

For such an analysis, one may not define a regime simply through just one set $M$ but also by an ``entry'' set $M_{\textnormal{in}}$ such that $\textnormal{cl}(M_{\textnormal{in}})\subset \textnormal{int}(M)$. A regime's duration is then measured as the time between entering $M_{\textnormal{in}}$ and exiting $M$. 

The second challenge is to derive a suitable notion of regime average.
We can try to compute the exit times $E_{\sigma}$ of a regime $M$ with respect to the stationary distribution $\mu_{\sigma}$ of the SDE~\eqref{eq:SDE}, conditional on some set of entry points~$\Gamma_{\textnormal{in}} \subset M_{\textnormal{in}}$, leading to $\int_{\Gamma_{\textnormal{in}}} E_{\sigma}\,\dd \mu_{\sigma}|_{\Gamma_{\textnormal{in}}}$. Assuming sufficient smoothness in $\sigma$, the infinitesimal change of these exit times is 
\begin{equation}\label{eq:derivative_Gamma_in}
\left( \int_{\Gamma_{\textnormal{in}}} E_{\sigma}\,\dd\mu_{\sigma}|_{\Gamma_{\textnormal{in}}} \right)' = \int_{\Gamma_{\textnormal{in}}} E_{\sigma}'\,\dd\mu_{\sigma}|_{\Gamma_{\textnormal{in}}} + \int_{\Gamma_{\textnormal{in}}} E_{\sigma}\,\dd(\mu_{\sigma}|_{\Gamma_{\textnormal{in}}})',
\end{equation}
where $'$ denotes the derivative with respect to~$\sigma$. This leads to a question of linear response theory with respect to noise for~$\mu_\sigma$~\cite{lucarini2012stochastic,galatolo2020quadratic}, whose restriction to $\Gamma_{\textnormal{in}}$  can also be seen as a \emph{reinjection probability}, see e.g.~\cite{del2012effect, krause2014noise, elaskar2011reinjection}.

If such a computation is feasible, one may make the set $M_{\textnormal{in}}$ depend on a small parameter~$\epsilon$, e.g.,
\begin{equation*}
    M_{\textnormal{in}}(\epsilon) := \{x\in \R^d \mid B_\epsilon(x) \subset M\}.
\end{equation*}
 For fixed $\epsilon$, we can then study the dependence of the average lifetime on $\sigma$, in particular the derivative of the average lifetime in $\sigma = 0$. This derivative still depends on $\epsilon$. Taking $\epsilon \to 0$, one could conclude the analysis by showing that the derivative converges to the quantity in~\eqref{eq:derivative_Gamma_in}.

\subsection{Stochastic limit cycles}
\label{ssec:limit_cycle}

A setting closely related to the one discussed in this article concerns the existence of an attracting, hyperbolic limit cycle $\Gamma \subset \mathbb R^d$ for the ODE dynamics of~\eqref{eq:SDE}, i.e.~when $\sigma =0$.
In the deterministic situation, such a limit cycle with period $\tau_\Gamma$ can be well described via the \emph{isochrons} $W(x)$ which are the leaves of the stable manifold $W^{\textnormal{s}} (\Gamma) = \cup_{x \in \Gamma} W(x)$, identifying the submanifolds of the absorbing neighbourhood whose asymptotic phase converges to $x \in \Gamma$ \cite{Guckenheimer75}. The isochrons can also be characterized as the level sets of an \emph{isochron map} $\theta: W^{\textnormal{s}} (\Gamma) \to \Gamma$.

Now, under the impact of stochasticity, i.e., in the situation of stochastically perturbed limit cycles, there have been several approaches to defining \emph{stochastic isochronicity}: using eigenfunctions of the associated infinitesimal generator $\mathcal L$ \cite{LindnerThomas}, identifying random isochrons as the random stable leaves of a random limit cycle as a stochastically covariant object (in a quenched sense) \cite{EngelKuehn}, or introducing them as sections with the mean first return time to the same section being a certain constant \cite{Caoetal2020, SchwabedalPikovsky}.
In terms of metastability around such limit cycles, a framework of random Poincar\'e maps has been developed \cite{BaudelBerglund}.

In this context, a question arises that is intimately related to our study and its techniques: how does the asymptotic average oscillation speed behave under variation of arbitrarily small noise? To answer this question, we need a way to quantify the average oscillation speed of a stochastic limit cycle. A natural choice is to use the isochron map $\theta$. 

Let $\phi^t$ denote the deterministic flow associated with~\eqref{eq:SDE} for $\sigma =0$, $\Gamma$ be the hyperbolic attracting limit cycle with period $\tau_\Gamma$
and $\omega:= \tau_\Gamma^{-1}$. We may consider the parameterization $\alpha: \Gamma \to S^1$ with
\begin{equation}
    \alpha(\phi^t x) = \alpha(x) + \omega t \mod 1, 
\end{equation} 
which can be extended to $W^{\textnormal{s}} (\Gamma)$ via $\alpha(x) = \alpha (\theta(x))$. For the solution $X_\sigma^t$ of the SDE~\eqref{eq:SDE}, with $\sigma \geq 0$ and $X_\sigma^0 \in W^{\textnormal{s}}(\Gamma)$, we consider the continuous map
    $a: [0,\infty) \to S^1, \, a(t) \coloneqq \alpha(X_{\sigma}^t)$
which is well-defined as long as $X_\sigma^t \in W^{\textnormal{s}}(\Gamma)$. There is a unique  lift $\Tilde{a}$ with $\Tilde{a}(0) = a(0) \in [0,1)$ and $a(t) = \Tilde{a}(t) \mod 1$, counting the number of rotations. Assuming the limit is well-defined, the average oscillation speed of the stochastic trajectory is given by
\begin{equation}\label{eq:winding_number}
    \omega_\sigma \coloneqq \lim_{t\to \infty} \frac{1}{t} \Tilde{a}(t).
\end{equation}
It turns out that for uniformly elliptic noise, this limit does not converge in general, see \cite{spitzer1958some} and \cite{vakeroudis2015windings} for a more recent presentation. The issue is that there is a point in the center of the limit cycle in which the isochron map is discontinuous. A stochastic trajectory that gets close to such a point can wind around the point arbitrarily fast in either direction. The variance of such excursions is too large such that they dominate the winding number $\omega_\sigma$ and prevent convergence of \eqref{eq:winding_number}. 

In \cite{giacomin2018small} the authors circumvent this obstruction by considering the dynamics only in the neighborhood of the limit cycle, where there are no singularities. For a stable limit cycle $\Gamma$, the expected time to leave such a neighborhood is exponential in $\sigma^{-1}$ such that their analysis is valid for long times, as long as $\sigma$ is small. Applying It\^o's formula to the phase parametrization $\alpha:W^s(\Gamma) \to \R$ and using $\nabla \alpha^\top v = \omega$ yields
\begin{equation}
    \E_x[\Tilde{a}(t)] = \Tilde{a}(0)+ \omega t + \sigma \E_x\left[ \int_0^t \operatorname{trace}\left( G(X_\sigma^s)^\top [\operatorname{Hess}\alpha(X_\sigma^s)] \,G(X_\sigma^s)\right) \dd s\right].
\end{equation}
A formal derivation along the lines of Section \ref{sec:proof_A} suggests that the derivative of $\omega_\sigma$ in $\sigma=0$ is given by
    \begin{equation}
        \partial_\sigma \omega_\sigma \big|_{\sigma=0} = \int_\Gamma \operatorname{trace}\left( G(x)^\top [\operatorname{Hess}\alpha(x)] \,G(x)\right) \, \mu_0(\dd x),
    \end{equation}
    where $\mu_0$ is the occupation measure of the limit cycle $\Gamma$. This formula is consistent with the results derived in \cite{giacomin2018small}. The problem with making this formula rigorous without the restriction to a neighborhood of $\Gamma$ is that the Hessian of $\alpha$ is unbounded around singularities.

    An alternative approach to quantify the oscillation speed of a stochastic limit cycle is to ask: How long does it take on average for the next full rotation to be completed? We use the same phase parametrization as before and define
    \begin{equation}
        \tau_\sigma \coloneqq \inf \{t\geq 0 \mid \Tilde{a}(t) - \Tilde{a}(0) \geq 1\}.
    \end{equation}
    We denote $E_\sigma(x) = \E_x[\tau_\sigma]$. The average time until the next full rotation is given by
    \begin{equation}
        \int_{\R^d} E_\sigma(x) \, \mu_\sigma (\dd x),
    \end{equation}
    where $\mu_\sigma$ is the stationary distribution of the SDE with noise strength $\sigma$. Like in \eqref{eq:derivative_Gamma_in}, the $\sigma$-derivative of the average time until the next full rotation is given by
    \begin{equation}
        \left(\int_{\R^d} E_\sigma \, \mu_\sigma \right)' = \int_{\R^d} E_\sigma' \, \mu_\sigma + \int_{\R^d} E_\sigma \mu_\sigma'.
    \end{equation}
    The stopping time $\tau_\sigma$ is closely related to the escape time $\tau_\sigma^M$ which we have studied in this article, and the derivative $\partial_\sigma E_\sigma\big|_{\sigma=0}$ can be computed analogously to Section \ref{sec:proofs}. Hence, the only missing part to compute the derivative in $\sigma=0$ of the average next rotation time is the linear response $\partial_\sigma \mu_\sigma |_{\sigma=0}$. This response can be computed using the methods from \cite{galatolo2020quadratic}, but making the result rigorous likely requires careful treatment of arising technicalities.

\section*{Acknowledgements}
The authors thank Zachary Adams, Christian Kuehn, and Joshua Dorrington for their insightful feedback and discussions on the topics of this article.

This research has been supported by Deutsche Forschungsgemeinschaft (DFG) through grant CRC 1114 Scaling Cascades in Complex Systems, Project Number 235221301, Project A08, “Characterization and Prediction of Quasi-Stationary Atmospheric States”.
ME was additionally supported by the Dutch Research Council (NWO), in the framework of the program VI.Vidi.233.133 `A Rigorous Framework for Transient Random Dynamics'.

\section*{Statement on the use of artificial intelligence}
We acknowledge the use of AI tools (LLMs), in particular ChatGPT 6 Astra, to generate code for Section~\ref{ssec:relaxation_oscillator}, to aid with the technical formulas of Appendix \ref{app:derivation}, and Claude Opus 5.5 to proofread the paper. The code has been verified by the authors. Appendix~\ref{app:derivation} has been carefully checked and rewritten by the authors, who are solely responsible for the correctness of the results.

\bibliographystyle{alpha}
\bibliography{bibliography}

\newcommand{\etalchar}[1]{$^{#1}$}
\begin{thebibliography}{NKMS90}

\bibitem[AE10]{altmann2010noise}
Eduardo~G. Altmann and Antonio Endler.
\newblock Noise-enhanced trapping in chaotic scattering.
\newblock {\em Phys. Rev. Lett.}, 105(24):244102, Dec 2010.

\bibitem[BB17]{BaudelBerglund}
Manon Baudel and Nils Berglund.
\newblock Spectral theory for random {P}oincar\'{e} maps.
\newblock {\em SIAM J. Math. Anal.}, 49(6):4319--4375, 2017.

\bibitem[BEGK04]{bovier2004metastability}
Anton Bovier, Michael Eckhoff, V{\'e}ronique Gayrard, and Markus Klein.
\newblock Metastability in reversible diffusion processes {I}: Sharp asymptotics for capacities and exit times.
\newblock {\em Journal of the European Mathematical Society}, 6(4):399--424, 2004.

\bibitem[Ber13]{berglund2013kramers}
Nils Berglund.
\newblock {Kramers}' law: Validity, derivations and generalisations.
\newblock {\em Markov Processes and Related Fields}, 19(3):459--490, 2013.

\bibitem[BG04]{berglund2004noise}
Nils Berglund and Barbara Gentz.
\newblock On the noise-induced passage through an unstable periodic orbit {I}: Two-level model.
\newblock {\em Journal of Statistical Physics}, 114(5--6):1577--1618, 2004.

\bibitem[BG06]{BerglundGentz2006}
Nils Berglund and Barbara Gentz.
\newblock {\em Noise-induced phenomena in slow-fast dynamical systems}.
\newblock Probability and its Applications (New York). Springer-Verlag London, Ltd., London, 2006.
\newblock A sample-paths approach.

\bibitem[BG14]{berglund2014noise}
Nils Berglund and Barbara Gentz.
\newblock On the noise-induced passage through an unstable periodic orbit {II}: General case.
\newblock {\em SIAM Journal on Mathematical Analysis}, 46(1):310--352, 2014.

\bibitem[BGK12]{BerglundGentzKuehn}
Nils Berglund, Barbara Gentz, and Christian Kuehn.
\newblock Hunting {F}rench ducks in a noisy environment.
\newblock {\em J. Differential Equations}, 252(9):4786--4841, 2012.

\bibitem[BR16]{bouchet2016generalisation}
Freddy Bouchet and Julien Reygner.
\newblock Generalisation of the {E}yring--{K}ramers transition rate formula to irreversible diffusion processes.
\newblock {\em Annales Henri Poincar{\'e}}, 17(12):3499--3532, 2016.

\bibitem[CLT20]{Caoetal2020}
Alexander Cao, Benjamin Lindner, and Peter~J. Thomas.
\newblock A partial differential equation for the mean-return-time phase of planar stochastic oscillators.
\newblock {\em SIAM J. Appl. Math.}, 80(1):422--447, 2020.

\bibitem[DGW92]{dayan1992stochastic}
Ido Dayan, Moshe Gitterman, and George~H Weiss.
\newblock Stochastic resonance in transient dynamics.
\newblock {\em Physical Review A}, 46(2):757--761, 1992.

\bibitem[DP23]{dorrington2023interaction}
Joshua Dorrington and Tim Palmer.
\newblock On the interaction of stochastic forcing and regime dynamics.
\newblock {\em Nonlinear Processes in Geophysics}, 30(1):49--62, 2023.

\bibitem[dRSE12]{del2012effect}
Ezequiel del Rio, Miguel~AF Sanju{\'a}n, and Sergio Elaskar.
\newblock Effect of noise on the reinjection probability density in intermittency.
\newblock {\em Communications in Nonlinear Science and Numerical Simulation}, 17(9):3587--3596, 2012.

\bibitem[Dur19]{durrett2019probability}
Rick Durrett.
\newblock {\em Probability: theory and examples}, volume~49 of {\em Cambridge Series in Statistical and Probabilistic Mathematics}.
\newblock Cambridge University Press, fifth edition, 2019.

\bibitem[EDRD11]{elaskar2011reinjection}
Sergio Elaskar, Ezequiel Del~Rio, and Jose~M Donoso.
\newblock Reinjection probability density in type-iii intermittency.
\newblock {\em Physica A: Statistical Mechanics and its Applications}, 390(15):2759--2768, 2011.

\bibitem[EK21]{EngelKuehn}
Maximilian Engel and Christian Kuehn.
\newblock A random dynamical systems perspective on isochronicity for stochastic oscillations.
\newblock {\em Comm. Math. Phys.}, 386(3):1603--1641, 2021.

\bibitem[Eyr35]{eyring1935activated}
Henry Eyring.
\newblock The activated complex in chemical reactions.
\newblock {\em The Journal of Chemical Physics}, 3(2):107--115, 1935.

\bibitem[Fra91]{franaszek1991influence}
Marek Franaszek.
\newblock Influence of noise on the mean lifetime of chaotic transients.
\newblock {\em Phys. Rev. A}, 44(6):4065--4067, Sep 1991.

\bibitem[FW98]{freidlin1998random}
Mark~I. Freidlin and Alexander~D. Wentzell.
\newblock {\em Random Perturbations of Dynamical Systems}, volume 260 of {\em Grundlehren der mathematischen Wissenschaften}.
\newblock Springer, second edition, 1998.

\bibitem[GJT09]{gutkin2009inhibition}
Boris~S Gutkin, J{\"u}rgen Jost, and Henry~C Tuckwell.
\newblock Inhibition of rhythmic neural spiking by noise: the occurrence of a minimum in activity with increasing noise.
\newblock {\em Naturwissenschaften}, 96(9):1091--1097, 2009.

\bibitem[GPS18]{giacomin2018small}
Giambattista Giacomin, Christophe Poquet, and Assaf Shapira.
\newblock Small noise and long time phase diffusion in stochastic limit cycle oscillators.
\newblock {\em Journal of Differential Equations}, 264(2):1019--1049, 2018.

\bibitem[Gra89]{graham1989macroscopic}
Robert Graham.
\newblock Macroscopic potentials, bifurcations and noise in dissipative systems.
\newblock In Frank Moss and P.~V.~E. McClintock, editors, {\em Noise in Nonlinear Dynamical Systems}, volume~1, chapter~7, pages 225--278. Cambridge University Press, 1989.

\bibitem[GS20]{galatolo2020quadratic}
Stefano Galatolo and Julien Sedro.
\newblock Quadratic response of random and deterministic dynamical systems.
\newblock {\em Chaos: An Interdisciplinary Journal of Nonlinear Science}, 30(2):023113, 2020.

\bibitem[Guc75]{Guckenheimer75}
J.~Guckenheimer.
\newblock Isochrons and phaseless sets.
\newblock {\em J. Math. Biol.}, 1(3):259--273, 1975.

\bibitem[HKN04]{helffer2004quantitative}
Bernard Helffer, Markus Klein, and Francis Nier.
\newblock Quantitative analysis of metastability in reversible diffusion processes via a {W}itten complex approach.
\newblock {\em Matem{\'a}tica Contempor{\^a}nea}, 26:41--86, 2004.

\bibitem[KEdR14]{krause2014noise}
Gustavo Krause, Sergio Elaskar, and Ezequiel del Rio.
\newblock Noise effect on statistical properties of type-i intermittency.
\newblock {\em Physica A: Statistical Mechanics and its Applications}, 402:318--329, 2014.

\bibitem[Kra40]{kramers1940brownian}
Hendrik~Anthony Kramers.
\newblock {Brownian} motion in a field of force and the diffusion model of chemical reactions.
\newblock {\em Physica}, 7(4):284--304, 1940.

\bibitem[Kue15]{KuehnBook}
Christian Kuehn.
\newblock {\em Multiple time scale dynamics}, volume 191 of {\em Applied Mathematical Sciences}.
\newblock Springer, Cham, 2015.

\bibitem[Kue17]{kuehn2017quenched}
Christian Kuehn.
\newblock Quenched noise and nonlinear oscillations in bistable multiscale systems.
\newblock {\em Europhysics Letters}, 120(1):10001, 2017.

\bibitem[Kwa14]{kwasniok2014enhanced}
Frank Kwasniok.
\newblock Enhanced regime predictability in atmospheric low-order models due to stochastic forcing.
\newblock {\em Philosophical Transactions of the Royal Society A: Mathematical, Physical and Engineering Sciences}, 372(2018):20130286, 2014.

\bibitem[Lev50]{levinson1950first}
Norman Levinson.
\newblock The first boundary value problem for $\varepsilon {\Delta} u+ {A} (x, y) u_x+ {B} (x, y) u_y+ {C} (x, y) u= {D} (x, y)$ for small $\varepsilon$.
\newblock {\em Annals of Mathematics}, 51(2):428--445, 1950.

\bibitem[LJG{\etalchar{+}}20]{lu2020inverse}
Lulu Lu, Ya~Jia, Mengyan Ge, Ying Xu, and Anbang Li.
\newblock Inverse stochastic resonance in {H}odgkin--{H}uxley neural system driven by {G}aussian and non-{G}aussian colored noises.
\newblock {\em Nonlinear Dynamics}, 100(1):877--889, 2020.

\bibitem[LT11]{lai2011transient}
Ying-Cheng Lai and Tam{\'a}s T{\'e}l.
\newblock {\em Transient chaos: complex dynamics on finite time scales}, volume 173 of {\em Applied Mathematical Sciences}.
\newblock Springer, 2011.

\bibitem[Luc12]{lucarini2012stochastic}
Valerio Lucarini.
\newblock Stochastic perturbations to dynamical systems: a response theory approach.
\newblock {\em Journal of Statistical Physics}, 146(4):774--786, 2012.

\bibitem[MS77]{matkowsky1977exit}
Bernard~J Matkowsky and Zeev Schuss.
\newblock The exit problem for randomly perturbed dynamical systems.
\newblock {\em SIAM Journal on Applied Mathematics}, 33(2):365--382, 1977.

\bibitem[MS96]{mantegna1996noise}
Rosario~N Mantegna and Bernardo Spagnolo.
\newblock Noise enhanced stability in an unstable system.
\newblock {\em Physical Review Letters}, 76(4):563--566, 1996.

\bibitem[NKMS90]{naeh1990direct}
T~Naeh, MM~K{\l}osek, BJ~Matkowsky, and Z~Schuss.
\newblock A direct approach to the exit problem.
\newblock {\em SIAM Journal on Applied Mathematics}, 50(2):595--627, 1990.

\bibitem[Rei94]{reimann1994suppression}
Peter Reimann.
\newblock Suppression of deterministic diffusion by noise.
\newblock {\em Physical Review E}, 50(2):727--735, 1994.

\bibitem[RY99]{revuz2013continuous}
Daniel Revuz and Marc Yor.
\newblock {\em Continuous Martingales and {Brownian} Motion}, volume 293 of {\em Grundlehren der mathematischen Wissenschaften}.
\newblock Springer, third edition, 1999.

\bibitem[Sch09]{schuss2009theory}
Zeev Schuss.
\newblock {\em Theory and applications of stochastic processes: an analytical approach}, volume 170.
\newblock Springer Science \& Business Media, 2009.

\bibitem[SCK{\etalchar{+}}26]{schoeller2026noise}
Henry Schoeller, Robin Chemnitz, P{\'e}ter Koltai, Maximilian Engel, and Stephan Pfahl.
\newblock Noise-induced enhancement of regime lifetimes--{A} data-driven approach using deterministic trajectories.
\newblock {\em arXiv preprint arXiv:2604.27991}, 2026.

\bibitem[SP13]{SchwabedalPikovsky}
Justus T.~C. Schwabedal and Arkady Pikovsky.
\newblock Phase description of stochastic oscillations.
\newblock {\em Phys. Rev. Lett.}, 110(20):204102, May 2013.

\bibitem[Spi58]{spitzer1958some}
Frank Spitzer.
\newblock Some theorems concerning 2-dimensional {Brownian} motion.
\newblock {\em Transactions of the American Mathematical Society}, 87(1):187--197, 1958.

\bibitem[TJ12]{tuckwell2012analysis}
Henry~C Tuckwell and J{\"u}rgen Jost.
\newblock Analysis of inverse stochastic resonance and the long-term firing of {H}odgkin--{H}uxley neurons with {G}aussian white noise.
\newblock {\em Physica A: Statistical Mechanics and its Applications}, 391(22):5311--5325, 2012.

\bibitem[TL14]{LindnerThomas}
Peter~J. Thomas and Benjamin Lindner.
\newblock Asymptotic phase for stochastic oscillators.
\newblock {\em Phys. Rev. Lett.}, 113(25):254101, Dec 2014.

\bibitem[Vak15]{vakeroudis2015windings}
Stavros Vakeroudis.
\newblock On the windings of complex-valued {Ornstein}--{Uhlenbeck} processes driven by a {Brownian} motion and by a stable process.
\newblock {\em Stochastics An International Journal of Probability and Stochastic Processes}, 87(5):766--793, 2015.

\bibitem[vdP20]{van1920theory}
B.~van~der Pol.
\newblock A theory of the amplitude of free and forced triode vibrations.
\newblock {\em Radio Review}, 1:701--710, 754--762, 1920.

\bibitem[YJ17]{yamakou2017simple}
Marius~E Yamakou and J{\"u}rgen Jost.
\newblock A simple parameter can switch between different weak-noise--induced phenomena in a simple neuron model.
\newblock {\em Europhysics Letters}, 120(1):18002, 2017.

\end{thebibliography}

\appendix

\section{Derivation of the algorithm in Section \ref{sec:computing}}
\label{app:derivation}

This section justifies the algorithm from Section \ref{sec:computing} which computes the integral
\begin{equation}
    \int_0^{E_0^M(x)} \Gamma(\phi^tx)\, \dd t.
\end{equation}
Let $x\in M$ be a point that leaves $M$ non-tangentially in a point $\theta \in \partial M$, and assume that $\partial M$ is twice differentiable around $\theta$. Let $N:\R^d\to \R$ be a boundary-defining function satisfying the properties (i) and (ii) from Section \ref{sec:computing}. In the following we justify the formulas from Step 1 to Step 4.

The deterministic escape time $E_0^M(x)$ is characterized by the condition
\begin{equation}\label{eq:implicit_diff}
    N\big(\phi^{E_0^M(x)}x\big) = 0.
\end{equation}
The first zero of $N(\phi^t x)$ determines the exit time $T$ and exit point $\theta$ in Step~1. Differentiating \eqref{eq:implicit_diff} twice gives the boundary data in Step~2. We then derive the equations that transport these data along the trajectory in Step~3 and accumulate the integral in Step~4.

To derive the boundary data in Step~2, we work directly at $\theta$. Recall the notation $n= \nabla N(\theta)$, $b=v(\theta)$, and $c=n^\top b$. Since $c>0$, the implicit function theorem gives a twice continuously differentiable local extension $u$ of $E_0^M$ near $\theta$, characterized by
\begin{equation}
    N\big(\phi^{u(y)}y\big)=0,\qquad u(\theta)=0.
\end{equation}
This extension allows negative hitting times for points outside $M$. Step 2 derives a formula for
\begin{equation}
    g(T) = \nabla u(\theta), \qquad H(T) = \operatorname{Hess} u(\theta).
\end{equation}
For a displacement $\epsilon h$, where $h\in\R^d$ is an arbitrary fixed direction, define
\begin{equation}\label{eq:def_T_i}
    T_h(\epsilon)=u(\theta+\epsilon h),\qquad
    \theta_h(\epsilon)=\phi^{T_h(\epsilon)}(\theta+\epsilon h).
\end{equation}
In particular, $T_h(0)=0$ and $\theta_h(0)=\theta$. Primes denote derivatives with respect to $\epsilon$, with $h$ held fixed. Differentiating $N(\theta_h(\epsilon))=0$ and the expression for $\theta_h(\epsilon)$ at $\epsilon=0$ gives
\begin{align}
    0&=n^\top\theta_h'(0),\label{eq:first_derivative}\\
    \theta_h'(0)&=h+bT_h'(0).\label{eq:der_theta}
\end{align}
We insert the second identity into the first and rearrange,
\begin{equation}\label{eq:der_T}
    T_h'(0)=-\frac{n^\top h}{c}.
\end{equation}
Consequently, the corresponding change in the exit point is
\begin{equation}\label{eq:def_z}
    \theta_h'(0)=h-\frac{n^\top h}{c}b=Ph.
\end{equation}
Since $T_h'(0) = g(T)^\top h$, this gives $g(T)=-n/c$ and explains the matrix $P$ in Step~2.

Differentiating the exit point $\theta_h(\epsilon)$ a second time and evaluating at $\epsilon=0$ gives
\begin{nalign}[eq:der2_theta]
    \theta_h''(0)
    &=2T_h'(0)Dv(\theta)h+T_h'(0)^2Dv(\theta)b+bT_h''(0)\\
    &=T_h'(0)Dv(\theta)(Ph+h)+bT_h''(0),
\end{nalign}
where we used $Ph=h+bT_h'(0)$ in the second equality.
On the other hand, differentiating $N(\theta_h(\epsilon))=0$ twice at $\epsilon=0$ gives
\begin{equation}
    0=(Ph)^\top\operatorname{Hess}N(\theta)(Ph)+n^\top\theta_h''(0).
\end{equation}
Inserting \eqref{eq:der2_theta} and using $a=Dv(\theta)^\top n$, we can solve for $T_h''(0)$:
\begin{equation}\label{eq:der2_T}
    h^\top H(T)h=T_h''(0)
    =-\frac{(Ph)^\top\operatorname{Hess}N(\theta)(Ph)}{c}
        +\frac{n^\top h}{c^2}a^\top(Ph+h).
\end{equation}
Substituting $Ph=h-b(n^\top h)/c$ in the last term, and using that $h$ is arbitrary, gives exactly the Hessian in \eqref{eq:boundary_derivatives}. Together with $g(T)=-n/c$, these are the terminal values for the backward equations in Step~3.

To derive the equations in Step~3, write $u=E_0^M$, so that $g(t)=\nabla u(X^t)$ and $H(t)=\operatorname{Hess}u(X^t)$. We use the identity
\begin{equation}
    u(\phi^\epsilon y)=u(y)-\epsilon,
\end{equation}
valid near each interior point of the trajectory for sufficiently small $\epsilon$. Differentiating this identity twice with respect to $y$ gives
\begin{align}
    \nabla u(y)&=D\phi^\epsilon(y)^\top\nabla u(\phi^\epsilon y),\\
    \operatorname{Hess}u(y)&=D\phi^\epsilon(y)^\top\operatorname{Hess}u(\phi^\epsilon y)D\phi^\epsilon(y)\\
        &\quad+\sum_{k=1}^d \partial_k u(\phi^\epsilon y)D^2\phi_k^\epsilon(y).
\end{align}
At $\epsilon=0$, we have $D\phi^0(y)=\Id$ and $D^2\phi^0(y)=0$. Their derivatives with respect to $\epsilon$ are $Dv(y)$ and $D^2v(y)$, respectively. Thus, differentiating the preceding relations in $\epsilon$ at $\epsilon=0$, with $y=\phi^t x=X^t$, gives
\begin{align}
    0&=Dv(X^t)^\top g(t)+\partial_t g(t),\\
    0&=Dv(X^t)^\top H(t)+\partial_t H(t)+H(t)Dv(X^t)\\
        &\quad+\sum_{k=1}^d g_k(t)\operatorname{Hess}v_k(X^t).
\end{align}
Rearranging gives \eqref{eq:derivative_transport}. Since the gradient and Hessian are known at $t=T$ from Step~2, these equations are solved backward along the trajectory from Step~1.

Finally, Step~4 is elementary. This concludes the derivation of the formulas from Step 1 to Step 4.

\end{document}